\newtheorem{Theorem}{\sc Theorem}[section]
\newtheorem{Lemma}[Theorem]{\sc Lemma}
\theoremstyle{definition}
\theoremstyle{plain}
\theoremstyle{definition}
\numberwithin{equation}{section}
\begin{document}
\title[ An improved Copson inequality]
{ An improved Copson inequality}

\author[Das]{Bikram Das}
\address{Dr. APJ Abdul Kalam Technical University, Lucknow-226021 \&
Indian Institute of Carpet Technology, Chauri Road, Bhadohi- 221401, Uttar Pradesh, India}
\email{dasb23113@gmail.com}
\author[Manna]{Atanu Manna$^*$ }
\address{Indian Institute of Carpet Technology, Bhadohi-221401, Uttar Pradesh, India}
\email{atanu.manna@iict.ac.in, atanuiitkgp86@gmail.com ($^*$Corresponding author)}

\subjclass[2010]{Primary 26D15; Secondary 26D10.}
\keywords{Discrete Hardy's inequality; Improvement; Copson's inequality.}

\begin{abstract}
  In this paper, we prove that the discrete Copson inequality (E.T. Copson, \emph{Notes on a series of positive terms}, J. London Math. Soc., 2 (1927), 49-51) of one-dimension in general cases admits an improvement. In fact we study the improvement of the following Copson's inequality
\begin{align*}
&\displaystyle\sum_{n=1}^{\infty}\frac{Q_{n}^{\alpha}|A_n-A_{n-1}|^{2}}{q_{n}}\geq\frac{(\alpha-1)^2}{4}\displaystyle\sum_{n=1}^{\infty}
\frac{q_{n}}{Q_{n}^{2-\alpha}}|A_{n}|^{2},
\end{align*}where $\alpha\in[0,1)$, $A_{n}=q_{1}a_{1}+ q_{2}a_{2}+ \ldots +q_{n}a_{n}$, $Q_{n}=q_1+q_2+\ldots+q_{n}$ for $n\in \mathbb{N}$, $\{q_n\}$ is a positive real sequence and $\{a_n\}$ is a sequence of complex numbers. We show that if $\{q_n\}$ is decreasing then the above inequality has an improvement for $\alpha\in [1/3, 1)$. We also prove that for some increasing sequences $\{q_n\}$ the above inequality can also be improved. Indeed, we prove that for $q_{n}=n$ and $q_n=n^3$, $n\in \mathbb{N}$ the corresponding Copson inequalities admit an improvement for $\alpha\in[\frac{17}{50}, 1)$ and $\alpha\in[0, \frac{1}{2}]$, respectively. Further, we show that in case of $q_{n}=1$, $n\in \mathbb{N}$ the reduced Copson inequality (known as Hardy's inequality with power weights) has achieved an improvement for $\alpha\in[0,1)$.
\end{abstract}
\maketitle

\section{Introduction}{\label{secint}}
Suppose that $A=\{A_n\}\in C_{c}(\mathbb{N}_0)$, space of finitely supported functions on $\mathbb{N}_0$ with the assumption that $A_0=0$. Then the renowned discrete Hardy's inequality (\cite{GHYLITTLE}, Theorem 326) in one dimension states that
\begin{align}{\label{EFDHI}}
&\displaystyle\sum_{n=1}^{\infty}|A_n-A_{n-1}|^{2}\geq\frac{1}{4}\displaystyle\sum_{n=1}^{\infty}\frac{|A_n|^{2}}{n^{2}},
\end{align}
where the associated constant term is sharp. The inequality (\ref{EFDHI}) has a rich history and wider applications. Due to its enormous applications in multiple branches of mathematics such as differential equations, graph theory, and spectral theory, the inequality (\ref{EFDHI}) has been extended in various ways. One of the such valuable extensions is given by Hardy himself (\cite{GHYNOTE251}, \cite{GHYNOTE252}), which says that for a strictly positive sequence $\{q_n\}$ of real numbers if $\{\sqrt{q_n}a_{n}\}\in \ell_2$ holds then
\begin{align}{\label{GDHI}}
\displaystyle\sum_{n=1}^{\infty}q_nQ_{n}^{-2}|A_{n}|^{2}&<4\displaystyle\sum_{n=1}^{\infty}q_{n}|a_{n}|^{2},
\end{align} unless all $a_n$ is null, and where $A_{n}=q_{1}a_{1}+ q_{2}a_{2}+ \ldots +q_{n}a_{n}$, $Q_{n}=q_1+q_2+\ldots+q_{n}$ for $n\in \mathbb{N}$. Also the constant is best possible. Another important extensions is given by Hardy and Littlewood \cite{GHYLITTELE} as below:
\begin{align}{\label{HLI}}
\displaystyle\sum_{n=1}^{\infty}n^{-c}(\sum_{k=1}^{n}a_k)^2\leq K(c)\sum_{n=1}^{\infty}n^{-c}(na_n)^2,
\end{align}where $p>1$, $c\neq 1$ and $K=K(c)$ is a constant depending on $c$ such that RHS of inequality (\ref{HLI}) is convergent. Motivated by the inequalities (\ref{GDHI}), and (\ref{HLI}), Copson \cite{ECN2} (see also \cite{ECN}) established a sharp inequality as below:
\begin{align}{\label{alphaCOPI}}
&\displaystyle\sum_{n=1}^{\infty}\frac{Q_{n}^{\alpha}|A_n-A_{n-1}|^{2}}{q_{n}}\geq\frac{(\alpha-1)^2}{4}
\displaystyle\sum_{n=1}^{\infty}\frac{q_{n}}{Q_{n}^{2-\alpha}}|A_{n}|^{2},
\end{align} where $\alpha\in[0,1)$ with $\alpha=2-c$ for $1<c\leq 2$. The inequality (\ref{alphaCOPI}) is known as Copson inequality.\\
The recent surprising discovery of Keller, Pinchover and Pogorzelski \cite{MKR} (see also \cite{MKRGRAPH}) suggests that the inequality (\ref{EFDHI}) is not optimal in the following sense
\begin{align}{\label{IMHI2}}
\displaystyle\sum_{n=1}^{\infty}| A_n-A_{n-1}|^{2}&\geq\displaystyle\sum_{n=1}^{\infty} w_n^{KPP}|A_n|^{2}>\frac{1}{4}\displaystyle
\sum_{n=1}^{\infty} \frac{|A_n|^{2}}{n^{2}},
\end{align}where $w_n^{KPP}=2-\sqrt{\frac{n-1}{n}}-\sqrt{\frac{n+1}{n}}>\frac{1}{4n^2}$, $n\in \mathbb{N}$. This notable contribution has stimulated the interest of research in the direction of improvement of various discrete Hardy-type inequalities in one dimension by several mathematicians and researchers (see e.g. \cite{SGA}, \cite{HUANG21}, \cite{HUANGYE}). A short proof of (\ref{IMHI2}) with optimality was presented by Krej\v{c}i\v{r}\'{i}k and \v{S}tampach \cite{DKK}. Later on, Krej\v{c}i\v{r}\'{i}k, Laptev and \v{S}tampach (see \cite{DKALFS}, Theorem 10) have jointly established an extended version of improved discrete Hardy inequality (\ref{IMHI2}), and proved that
\begin{align}{\label{IMHIGN3}}
\displaystyle\sum_{n=1}^{\infty}| A_n-A_{n-1}|^{2}&\geq \displaystyle\sum_{n=1}^{\infty} w_n(\mu)|A_n|^{2},
\end{align}
where $w_n(\mu)=2-\frac{\mu_{n-1}}{\mu_n}-\frac{\mu_{n+1}}{\mu_n}$, $n\in \mathbb{N}$, and $\{\mu_n\}$ is a strictly positive sequence of real numbers such that $\mu_0=0$. Unifying both the improved discrete Hardy inequalities (\ref{IMHI2}, \ref{IMHIGN3}), we \cite{DASMAN} have obtained an extended improved discrete Hardy's inequality in one dimension as follows
\begin{align}{\label{GIMHI}}
\displaystyle\sum_{n=1}^{\infty}\frac{|A_{n}-A_{n-1}|^{2}}{\lambda_{n}}\geq \displaystyle\sum_{n=1}^{\infty}w_{n}(\lambda, \mu )|A_n|^{2},
\end{align} where $\lambda=\{\lambda_n\}$ is a real sequence such that $\lambda_{n}>0$, $n\in \mathbb{N}$ and $w_{n}(\lambda, \mu)$ has the following expression
$w_{n}(\lambda, \mu) =\frac{1}{\lambda_{n}}+\frac{1}{\lambda_{n+1}}-\frac{\mu_{n-1}}{\lambda_{n}\mu_{n}}-\frac{\mu_{n+1}}{\lambda_{n+1}\mu_{n}}$.
It is pertinent to mentioned here that our inequality (\ref{GIMHI}) not only gives the above inequalities (\ref{IMHI2}, \ref{IMHIGN3}) but also the improved discrete Hardy inequality with power weights, which is established recently by Gupta \cite{SGA}. He proved that for $\alpha\in \{0\}\cup[1/3, 1)$ and $\beta=\frac{1-\alpha}{2}$
\begin{align}{\label{GUPI}}
\displaystyle\sum_{n=1}^{\infty}|A_{n}-A_{n-1}|^{2}n^{\alpha} &\geq \displaystyle\sum_{n=1}^{\infty}w_{n}(\alpha,\beta)|A_{n}|^{2}>\frac{(\alpha-1)^2}{4}\sum_{n=1}^{\infty}\frac{|A_{n}|^{2}}{n^2}{n}^{\alpha}, ~~~\mbox{holds}
\end{align}where $w_{1}(\alpha,\beta):=1+2^{\alpha}-2^{\alpha+\beta}$, and $w_{n}(\alpha,\beta)=n^{\alpha}\Big[1+\Big(1+\frac{1}{n}\Big)^{\alpha}-\Big(1-\frac{1}{n}\Big)^{\beta}-\Big(1+\frac{1}{n}\Big)^{\alpha+\beta}\Big]$ for $n\geq2$. We note that the inequality (\ref{GUPI}) is indeed a Copson's inequality (\ref{alphaCOPI}) for $q_n=1$, $n\in \mathbb{N}$. That means the Copson's inequality for $q_n=1$, $n\in \mathbb{N}$ (i.e. Hardy's inequality with power weights) achieved an improvement when $\alpha\in \{0\}\cup[1/3, 1)$. This leads to an important question given below, and which will be our first investigation in this paper:
\begin{center}
Q(a) \emph{Is it possible to extend the interval of $\alpha$ for which improvement of (\ref{GUPI}) still possible?}
\end{center}
Many authors studying improvement of discrete Hardy inequality in different contexts such as in graph setting (see \cite{MKRGRAPH}, \cite{MKRRELGRAPH}), in $\mathbb{Z}^d$ (see \cite{HUANGYE}, \cite{MKRGRAPH}) etc. But there is no such study has been found on the improvement of the Copson inequality (\ref{alphaCOPI}). Although, we have considered the improvement of the Copson inequality (\ref{alphaCOPI}) for $q_n=n$, $n\in \mathbb{N}$ in \cite{DASMAN}, and have shown that it admits an improvement only when $\alpha=\frac{1}{2}$ (or $c=\frac{3}{2}$). Again in our another work \cite{DASMAN2}, we were able to achieve an improvement of the Copson inequality (\ref{alphaCOPI}) for $q_n=n^2$, and $q_n=n^3$, $n\in \mathbb{N}$ and when $\alpha=\frac{1}{2}$ only. With these investigations following questions may be asked.
\begin{center}
Q(b) \emph{Can we improve the Copson inequality (\ref{alphaCOPI}) for an arbitrary sequence $\{q_n\}$?}
\end{center}If we get a favourable answer to this question $Q(b)$ then
\begin{center}
Q(c) \emph{What are the possible values of $\alpha$ for which (\ref{alphaCOPI}) gets an improvement?}
\end{center}
Therefore the main objective of this paper is to supply the possible answers of the above proposed queries $(Q(a)$ to $Q(c))$. To reach our objectives first we give an affirmative answer to the question $Q(a)$. That is we prove that the range of $\alpha$ from $\{0\}\cup[1/3, 1)$ can be extended to a larger one $[0, 1)$ for which the improvement of (\ref{GUPI}) is still possible. Consequently, we strengthen the result of Gupta \cite{SGA}. When providing an answer to $Q(b)$, we have seen that for any decreasing sequence $\{q_n\}$, the improvement of (\ref{alphaCOPI}) is attainable for $\alpha\in [1/3, 1)$. But we couldn't manage to answer this question for an arbitrary increasing sequence $\{q_n\}$. Rather, we have shown that there exists improvements of (\ref{alphaCOPI}) for some strictly increasing sequences $\{q_n\}$ such as $q_n=n$, and $q_n=n^3$, $n\in \mathbb{N}$. We prove that the Copson's inequality (\ref{alphaCOPI}) for $q_n=n$ achieves an improvement when $\alpha\in [17/50, 1)$, whereas for $q_n=n^3$, Copson's inequality (\ref{alphaCOPI}) gets an improvement when $\alpha\in [0, 1/2]$.\\
The paper is organized as follows. In Section 2, we establish an improved Hardy's inequality with power weights $n^\alpha$ for $\alpha\in [0, 1)$. Section 3 deals with the improvement of the Copson inequality (\ref{alphaCOPI}) for an arbitrary sequence $\{q_n\}$. In fact, in first subsection we have shown an improved Copson inequality (\ref{alphaCOPI}) for an arbitrary decreasing sequence $\{q_n\}$, and in another subsection, we have proved an improved Copson inequality (\ref{alphaCOPI}) for some increasing sequences $\{q_n\}$.

\section{Improved Hardy's inequality with power weights $n^\alpha$ for $\alpha\in [0, 1)$}
We have seen that the Copson inequality (\ref{alphaCOPI}) in case of $q_n=1$ for $n\in \mathbb{N}$ is equivalent to the inequality (\ref{GUPI}) established by Gupta \cite{SGA}. More precisely, he has obtained an improvement of the inequality (\ref{GUPI}) for $\alpha\in \{0\}\cup[\frac{1}{3}, 1)$. In this section, we strengthen the result of Gupta \cite{SGA} and prove that the improvement of (\ref{GUPI}) is possible for all $\alpha\in [0, 1)$. In fact we have the following result. Let us begin with the following lemma.
\begin{Lemma}{\label{W-1gater}}
Let $\alpha\in[0, 1)$. Then we have
\begin{align*}
1+2^{\alpha}-2^{\frac{1}{2}+\frac{\alpha}{2}}& >\frac{1}{4}(\alpha-1)^2.
\end{align*}
\end{Lemma}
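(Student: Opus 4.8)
The plan is to reduce this inequality to a trivial comparison by a single substitution. First I would set $u = 2^{\frac{1+\alpha}{2}}$, so that $u>0$ and $u^{2} = 2^{1+\alpha} = 2\cdot 2^{\alpha}$, hence $2^{\alpha} = u^{2}/2$. Substituting this into the left-hand side and completing the square gives
\begin{align*}
1 + 2^{\alpha} - 2^{\frac{1}{2}+\frac{\alpha}{2}} = 1 + \frac{u^{2}}{2} - u = \frac12\bigl(u^{2} - 2u + 2\bigr) = \frac12\bigl[(u-1)^{2} + 1\bigr] \ge \frac12 .
\end{align*}

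Next I would bound the right-hand side: for $\alpha\in[0,1)$ we have $0 \le 1-\alpha \le 1$, so $\frac14(\alpha-1)^{2} = \frac14(1-\alpha)^{2} \le \frac14$. Combining the two observations yields $1 + 2^{\alpha} - 2^{\frac12+\frac\alpha2} \ge \frac12 > \frac14 \ge \frac14(\alpha-1)^{2}$, which is exactly the claim (and with room to spare, since the left side is in fact at least $2-\sqrt2$ on $[0,1)$, attained as $\alpha\to 0$).

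There is no real obstacle here — the only point requiring any thought is spotting the substitution $u = 2^{(1+\alpha)/2}$, after which the estimate is immediate. If one wished to avoid the substitution, an alternative is the calculus route: let $f(\alpha) = 1 + 2^{\alpha} - 2^{\frac{1+\alpha}{2}} - \frac14(1-\alpha)^{2}$, note $f(0) = \frac74 - \sqrt2 > 0$, and check that $f'(\alpha) = (\ln 2)\bigl(2^{\alpha} - 2^{\frac{\alpha-1}{2}}\bigr) + \frac12(1-\alpha) > 0$ for $\alpha\in[0,1)$ — the bracket is nonnegative because $\alpha \ge \frac{\alpha-1}{2}$, and the last term is positive because $\alpha<1$ — so $f$ is increasing and $f(\alpha) \ge f(0) > 0$. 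I would nonetheless present the substitution argument, as it is shorter, self-contained, and yields the cleaner intermediate bound $\tfrac12$.
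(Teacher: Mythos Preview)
Your proof is correct, and your primary argument is genuinely different from the paper's. The paper takes the calculus route you sketch as an alternative: it sets $f(\alpha)=1+2^{\alpha}-2^{(1+\alpha)/2}-\tfrac14(\alpha-1)^2$, computes $f'(\alpha)=\tfrac{\log 2}{2}\bigl(2^{\alpha+1}-2^{(1+\alpha)/2}\bigr)+\tfrac{1-\alpha}{2}>0$, and concludes from $f(0)>0$. Your ``alternative'' is therefore exactly the paper's proof, modulo the trivial rewriting $\tfrac{\log 2}{2}\cdot 2^{\alpha+1}=(\log 2)\,2^{\alpha}$ and $\tfrac{\log 2}{2}\cdot 2^{(1+\alpha)/2}=(\log 2)\,2^{(\alpha-1)/2}$.

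Your substitution $u=2^{(1+\alpha)/2}$, on the other hand, is a cleaner and more elementary argument: it is purely algebraic, avoids differentiation, and yields the explicit uniform bounds $1+2^{\alpha}-2^{(1+\alpha)/2}=\tfrac12[(u-1)^2+1]\ge\tfrac12$ and $\tfrac14(1-\alpha)^2\le\tfrac14$, so the two sides are separated by at least $\tfrac14$. The paper's approach only shows the difference is positive, without quantifying the gap. The only (minor) advantage of the calculus argument is that it requires no insight --- it is the default move when one does not notice the completed-square structure.
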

\begin{proof}
Let us consider a function $f(\alpha)$ defined as below:
\begin{align*}
f(\alpha)&=1+2^{\alpha}-2^{\frac{1}{2}+\frac{\alpha}{2}}-\frac{1}{4}(\alpha-1)^2.
\end{align*}
Then we have the following derivative:
\begin{align*}
f'(\alpha)&=\Big(\frac{\log2}{2}(2^{\alpha+1}-2^{\frac{1}{2}+\frac{\alpha}{2}})+\frac{(1-\alpha)}{2}\Big)>0\\
&\Big[\mbox{since},~~2^{\alpha+1}>2^{\frac{1+\alpha}{2}}~~\mbox{for~all}~~\alpha\in[0, 1)\Big]
\end{align*} Hence $f'(\alpha)$ is increasing in $[0, 1)$. Since $f(0)>0$, so we conclude that $f(\alpha)>0$ for $\alpha\in[0, 1)$. Hence the proof.
\end{proof}

\begin{Theorem}Let $\{A_n\}\in C_c(\mathbb{N}_0)$ be such that $A_{0}=0$ and $\alpha\in[0, 1)$. Then we have the following improvement
\begin{align}{\label{GUPI01}}
\displaystyle\sum_{n=1}^{\infty}|A_{n}-A_{n-1}|^{2}n^{\alpha} &\geq \displaystyle\sum_{n=1}^{\infty}w_{n}(\alpha)|A_{n}|^{2}>\frac{(\alpha-1)^2}{4}\sum_{n=1}^{\infty}\frac{|A_{n}|^{2}}{n^2}{n}^{\alpha},
\end{align}where  $w_{1}(\alpha):=1+2^{\alpha}-2^{\frac{1+\alpha}{2}}$, and for $ n\geq2$
\begin{align*}
w_{n}(\alpha)& =n^{\alpha}\Big[1+\Big(1+\frac{1}{n}\Big)^{\alpha}-\Big(1-\frac{1}{n}\Big)^{\frac{1-\alpha}{2}}-\Big(1+\frac{1}{n}\Big)^{\frac{1+\alpha}{2}}\Big].
\end{align*}
\end{Theorem}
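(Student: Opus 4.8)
The statement packages two inequalities, and I would prove them one after the other.

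\emph{The first inequality.} The plan is to read it off from our general improved Hardy inequality (\ref{GIMHI}) with the choice $\lambda_n=n^{-\alpha}$ and $\mu_n=n^{(1-\alpha)/2}$. These satisfy the hypotheses of (\ref{GIMHI}): $\lambda_n>0$ and $\mu_n>0$ for $n\ge1$, and $\mu_0=0$ because $(1-\alpha)/2>0$. Since $1/\lambda_n=n^{\alpha}$, the left-hand side of (\ref{GIMHI}) is exactly $\sum_n n^{\alpha}|A_n-A_{n-1}|^2$, so it only remains to check the pointwise identity $w_n(\lambda,\mu)=w_n(\alpha)$. For $n=1$ the term $\mu_0/(\lambda_1\mu_1)$ drops out and one gets $1+2^{\alpha}-2^{\alpha+(1-\alpha)/2}=1+2^{\alpha}-2^{(1+\alpha)/2}$; for $n\ge2$ one factors $n^{\alpha}$ out of $n^{\alpha}+(n+1)^{\alpha}-n^{\alpha}(1-\tfrac1n)^{(1-\alpha)/2}-(n+1)^{\alpha}(1+\tfrac1n)^{(1-\alpha)/2}$ and uses $\alpha+\tfrac{1-\alpha}{2}=\tfrac{1+\alpha}{2}$ to land on the stated $w_n(\alpha)$. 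Nothing here is delicate.

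\emph{The second inequality.} Since $\{A_n\}$ is finitely supported and not identically zero, it suffices to establish the term-by-term strict inequality $w_n(\alpha)>\tfrac{(\alpha-1)^2}{4}\,n^{\alpha-2}$ for every $n\ge1$. The case $n=1$ is precisely Lemma \ref{W-1gater} (there $n^{\alpha-2}=1$). For $n\ge2$ I would substitute $x=1/n\in(0,\tfrac12]$ and divide through by $n^{\alpha}$; setting $\beta=\tfrac{1-\alpha}{2}$ and $\gamma=\tfrac{1+\alpha}{2}$, so that $\beta+\gamma=1$, $\gamma-\beta=\alpha$, $\alpha+\beta=\gamma$ and $\tfrac{(\alpha-1)^2}{4}=\beta^2$, the claim becomes the single-variable inequality $g(x):=1+(1+x)^{\alpha}-(1-x)^{\beta}-(1+x)^{\gamma}>\beta^{2}x^{2}$ on $(0,\tfrac12]$.

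To prove this, put $h(x)=g(x)-\beta^{2}x^{2}$. A short Taylor expansion at the origin, using the three relations above, shows that the constant, linear and quadratic coefficients of $g$ equal $0$, $0$ and $\beta^{2}$; hence $h(0)=h'(0)=h''(0)=0$, and everything comes down to the sign of $h'''$. One computes $h'''(x)=P(\alpha)(1+x)^{\alpha-3}+P(\beta)(1-x)^{\beta-3}-P(\gamma)(1+x)^{\gamma-3}$ with $P(t)=t(t-1)(t-2)$. For $\alpha\in[0,1)$ one has $P(\alpha)\ge0$, $P(\beta)>0$, $P(\gamma)>0$, and the key elementary identity $P(\beta)-P(\gamma)=\tfrac{\alpha(1-\alpha^{2})}{4}\ge0$. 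Combining these with $(1+x)^{\alpha-3}>0$, $(1-x)^{\beta-3}>1$ and $(1+x)^{\gamma-3}<1$ on $(0,\tfrac12]$ gives $h'''(x)>P(\beta)-P(\gamma)\ge0$ there. Integrating three times from $0$ (using $h(0)=h'(0)=h''(0)=0$) yields $h>0$ on $(0,\tfrac12]$, that is, $w_n(\alpha)>\tfrac{(\alpha-1)^2}{4}n^{\alpha-2}$ for $n\ge2$, and the theorem follows.

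The only step requiring a little care is the last one: because $g$ already agrees with the model quadratic $\beta^{2}x^{2}$ to second order, one must pass to the \emph{third} derivative of $h$ before a clean sign emerges, and then the estimate $P(\beta)\ge P(\gamma)$ has to hold throughout $\alpha\in[0,1)$ — which it does, and this is exactly what removes Gupta's restriction $\alpha\in\{0\}\cup[1/3,1)$ in (\ref{GUPI}). Beyond organising these elementary estimates I do not anticipate any obstacle.
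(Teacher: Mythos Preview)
Your proof is correct and follows essentially the same route as the paper: the same choice $\lambda_n=n^{-\alpha}$, $\mu_n=n^{(1-\alpha)/2}$ in (\ref{GIMHI}) for the first inequality, Lemma~\ref{W-1gater} for $n=1$, and for $n\ge2$ the same reduction to $S(x)=R(x)-\tfrac{(\alpha-1)^2}{4}x^2$ with $S(0)=S'(0)=S''(0)=0$ and $S'''>0$. The only cosmetic difference is in the verification of $S'''>0$: the paper differentiates the bracketed piece once more, whereas you use the crisp bounds $(1-x)^{\beta-3}>1>(1+x)^{\gamma-3}$ together with the identity $P(\beta)-P(\gamma)=\tfrac{\alpha(1-\alpha^2)}{4}\ge0$, which is a slightly cleaner (and uniform in $\alpha\in[0,1)$) way to reach the same conclusion.
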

\begin{proof}
The LHS of the inequality $(\ref{GUPI01})$ immediately follows from (\ref{GIMHI}) by choosing $\lambda_{n}=\frac{1}{n^{\alpha}}$ and $\mu_{n}=n^{\frac{1-\alpha}{2}}$. Throughout the proof we consider only $\alpha\in(0, 1)$ as for $\alpha=0$ improvement is already achieved in inequality (\ref{IMHI2}). For $n=1$, the improvement is easily followed from Lemma \ref{W-1gater}. In fact we have
\begin{align*}
w_{1}(\alpha)&=1+2^{\alpha}-2^{\frac{1+\alpha}{2}}> \frac{1}{4}(\alpha-1)^2.
\end{align*}
For $n\geq 2$, we define a real valued continuous function $R(x)$ on $x\in[0, \frac{1}{2}]$ as below
\begin{align*}
R(x)=&1+(1+x)^{\alpha}-(1-x)^{\frac{1-\alpha}{2}}-(1+x)^{\frac{1+\alpha}{2}}.
\end{align*}
Then we have
\begin{align*}
R'(x)=&\alpha(1+x)^{\alpha-1}+\frac{1-\alpha}{2}(1-x)^{\frac{-1-\alpha}{2}}-\frac{1+\alpha}{2}(1+x)^{\frac{\alpha-1}{2}},\\
R''(x)=&\alpha(\alpha-1)(1+x)^{\alpha-2}+\big(\frac{1-\alpha^{2}}{4}\big)\Big[(1-x)^{\frac{-3-\alpha}{2}}+(1+x)^{\frac{\alpha-3}{2}}\Big],\\
R'''(x)=&\alpha(1-\alpha)(2-\alpha)(1+x)^{\alpha-3}+\big(\frac{1-\alpha^{2}}{4}\big)\Big[\frac{3+\alpha}{2}(1-x)^{\frac{-5-\alpha}{2}}-
\frac{3-\alpha}{2}(1+x)^{\frac{\alpha-5}{2}}\Big],\\
=&I+\big(\frac{1-\alpha^{2}}{4}\big)f(x),
\end{align*}where
\begin{align*}
I&=\alpha(1-\alpha)(2-\alpha)(1+x)^{\alpha-3},\\
f(x)=&\frac{3+\alpha}{2}(1-x)^{\frac{-5-\alpha}{2}}-\frac{3-\alpha}{2}(1+x)^{\frac{\alpha-5}{2}},~~~\mbox{and hence}\\
f'(x)=&\frac{3+\alpha}{2}\frac{5+\alpha}{2}(1-x)^{\frac{-7-\alpha}{2}}+\frac{3-\alpha}{2}\frac{5-\alpha}{2}(1+x)^{\frac{\alpha-7}{2}}.
\end{align*}
Clearly $f'(x)>0$ for $x\in[0, \frac{1}{2}]$ and $\alpha\in(0, 1)$. This says that $f(x)$ is increasing. Since $f(0)=0$, so we have $f(x)\geq 0$ for $x\in[0, \frac{1}{2}]$.
Also since $I>0$ for $\alpha\in (0, 1)$ therefore we asserts that $R'''(x)>0$ for $x\in[0, \frac{1}{2}]$.\\
Now denote $S(x)=R(x)-\frac{(\alpha-1)^2}{4}x^{2}$. Then for $x\in[0, \frac{1}{2}]$, we get
\begin{align*}
S'(x)=&R'(x)-\frac{(\alpha-1)^2}{2}x,\\
S''(x)=&R''(x)-\frac{(\alpha-1)^2}{2},\\
S'''(x)=&R'''(x).
\end{align*}
Since $R'''(x)>0$ so we have $S'''(x)>0$ in $[0, \frac{1}{2}]$. Note that $S''(0)=S'(0)=S(0)=0$. This tells that $S(x)>0$ for $x\in(0, \frac{1}{2}]$, consequently we have $R(x)>\frac{(\alpha-1)^2}{4}x^{2}$ for $x\in(0, \frac{1}{2}]$. We choose $x=\frac{1}{n}$ for $n\geq 2$, then we get $R(\frac{1}{n})>\frac{(\alpha-1)^2}{4n^{2}}$. Since $w_{n}(\alpha)=n^{\alpha}R(\frac{1}{n})$ then we have for $n\geq 2$ and $\alpha\in(0, 1)$
\begin{align*}
&w_{n}(\alpha)>\frac{(\alpha-1)^2}{4}{n}^{\alpha-2}.
\end{align*}
This completes proof of the theorem.
\end{proof}

\section{Improvement of general Copson's inequality}
In this section, we study the improvement of the Copson's inequality (\ref{alphaCOPI}) in two cases. First we consider the inequality (\ref{alphaCOPI}) for decreasing sequence $\{q_n\}$, and second for some increasing sequences $\{q_n\}$, especially $q_n=n$ and $q_n=n^3$ for $n\in \mathbb{N}$. At this point, we indicate that for arbitrary increasing sequence $\{q_n\}$, we were not able to conclude the improvement of (\ref{alphaCOPI}). Although in our previous works (\cite{DASMAN}, \cite{DASMAN2}), we were only able to prove that the inequality (\ref{alphaCOPI}) admits improvement in a special case of $\alpha=\frac{1}{2}$ $(c=\frac{3}{2})$, and for some increasing sequences such as $q_n=n, n^2, n^3$, $n\in \mathbb{N}$. Let us start with an arbitrary positive decreasing sequence $\{q_n\}$.
\subsection{For decreasing sequence $\{q_n\}$}
Let $\alpha\in[0,1)$, and $\{\mu_{n}\}$ be a strictly positive sequence of real numbers. Then we define a weight sequence $w_{n}(q, Q)$ as below:
\begin{align*}
w_{n}(q,Q)& =\frac{Q_{n}^{\alpha}}{q_{n}}+\frac{Q_{n+1}^{\alpha}}{q_{n+1}}-\frac{\mu_{n-1}Q_{n}^{\alpha}}{q_{n}\mu_{n}}-\frac{\mu_{n+1}Q_{n+1}^{\alpha}}{q_{n+1}\mu_{n}}.
\end{align*}
Now with the choice of $\mu_{n}=Q_{n}^{\frac{1-\alpha}{2}}$, weight sequence $w_{n}(q, Q)$ can be rewritten as below:
\begin{align*}
w_{n}(q, Q)& =\frac{Q_{n}^{\alpha}}{q_{n}}T_{n},
\end{align*}
where of course
\begin{align*}
T_{n}& =1+\frac{q_{n}}{q_{n+1}}\big(\frac{Q_{n+1}}{Q_{n}}\big)^{\alpha}-\big(\frac{Q_{n-1}}{Q_{n}}\big)^{\frac{1-\alpha}{2}}-
\frac{q_{n}}{q_{n+1}}\big(\frac{Q_{n+1}}{Q_{n}}\big)^{\frac{1+\alpha}{2}}\\
&=1+\frac{q_{n}}{q_{n+1}}\Big(1+\frac{q_{n+1}}{Q_{n}}\Big)^{\alpha}-\Big(1-\frac{q_{n}}{Q_{n}}\Big)^{\frac{1-\alpha}{2}}-
\frac{q_{n}}{q_{n+1}}\Big(1+\frac{q_{n+1}}{Q_{n}}\Big)^{\frac{1+\alpha}{2}}.
\end{align*}
Expanding $T_{n}$ as an infinite series, we have
\begin{align*}
T_{n}&=\displaystyle\sum_{k=2}^{\infty}\Big[\binom{\alpha}{k}\frac{q_{n}q_{n+1}^{k-1}}{Q_{n}^{k}}-\binom{\frac{1+\alpha}{2}}{k}
\frac{q_{n}q_{n+1}^{k-1}}{Q_{n}^{k}}-(-1)^{k}\binom{\frac{1-\alpha}{2}}{k}\frac{q_{n}^{k}}{Q_{n}^{k}}\Big]\\
&=\frac{1-\alpha^{2}}{8}\frac{q_{n}^{2}}{Q_{n}^{2}}-\frac{(1-\alpha)(3\alpha-1)}{8}\frac{q_{n}q_{n+1}}{Q_{n}^{2}}+\displaystyle\sum_{k=3}^{\infty}C_{k}(q,\alpha),
\end{align*}
where
\begin{align}{\label{CK-equation}}
C_{k}(q,\alpha)&=\Big[\binom{\alpha}{k}\frac{q_{n}q_{n+1}^{k-1}}{Q_{n}^{k}}-\binom{\frac{1+\alpha}{2}}{k}
\frac{q_{n}q_{n+1}^{k-1}}{Q_{n}^{k}}-(-1)^{k}\binom{\frac{1-\alpha}{2}}{k}\frac{q_{n}^{k}}{Q_{n}^{k}}\Big].
\end{align}
Notice that $(1-\alpha)(3\alpha-1)\geq0$ for $\alpha\in[\frac{1}{3},1)$. Hence if $\alpha\in[\frac{1}{3}, 1)$ then decreasing-ness of $\{q_{n}\}$ implies that
\begin{align}{\label{T equation}}
T_{n}&\geq\frac{(\alpha-1)^{2}}{4}\frac{q_{n}^{2}}{Q_{n}^{2}}+\displaystyle\sum_{k=3}^{\infty}C_{k}(q,\alpha).
\end{align}
Now our aim is to show that $C_{k}(q,\alpha)$ is positive for $k\geq 3$. With this aim, we first prove the following lemma.
\begin{Lemma}{\label{Positive}}
Let $\alpha\in[\frac{1}{3},1)$ and $\{q_{n}\}$ be decreasing sequence of positive real numbers. Then $C_{k}(q,\alpha)$ is strictly positive for $k\geq3$, $k\in\mathbb{N}$.
\end{Lemma}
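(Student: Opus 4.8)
\emph{Proof plan.} Fix $n\in\mathbb{N}$ and, to lighten notation, set $P=q_{n}q_{n+1}^{k-1}Q_{n}^{-k}$, $R=q_{n}^{k}Q_{n}^{-k}$, $u=\tfrac{1+\alpha}{2}$, $v=\tfrac{1-\alpha}{2}$; then $u+v=1$, $u-v=\alpha$, all three of $\alpha,u,v$ lie in $(0,1)$ for $\alpha\in[\tfrac13,1)$, and $0<P\le R$ because $\{q_{n}\}$ is decreasing. In these terms $C_{k}(q,\alpha)=\bigl(\binom{\alpha}{k}-\binom{u}{k}\bigr)P-(-1)^{k}\binom{v}{k}R$. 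The plan is to run a sign analysis on the three binomial coefficients: for $s\in(0,1)$ the product $s(s-1)\cdots(s-k+1)$ has exactly one positive and $k-1$ negative factors, hence $\binom{s}{k}=(-1)^{k-1}|\binom{s}{k}|$; applying this with $s=\alpha,u,v$, and using $(-1)^{k+1}\binom{v}{k}=|\binom{v}{k}|>0$, turns the formula into
\begin{equation*}
C_{k}(q,\alpha)=(-1)^{k-1}\left(\left|\binom{\alpha}{k}\right|-\left|\binom{u}{k}\right|\right)P+\left|\binom{v}{k}\right|R .
\end{equation*}

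Next I would isolate a \emph{first comparison}: $|\binom{\alpha}{k}|\ge|\binom{u}{k}|$ for all $k\ge2$, strictly for $k\ge3$. The quotient $|\binom{\alpha}{k}|\big/|\binom{u}{k}|$ gets multiplied by $\tfrac{k-\alpha}{k-u}>1$ each time $k$ increases by one (as $\alpha<u$), so it is strictly increasing in $k$, and at $k=2$ it equals $\tfrac{\alpha(1-\alpha)}{u(1-u)}=\tfrac{4\alpha}{1+\alpha}\ge1$ exactly because $\alpha\ge\tfrac13$. Now split by the parity of $k$. For \emph{odd} $k\ge3$ we have $(-1)^{k-1}=1$, so in the displayed formula both summands are nonnegative and the second is strictly positive; hence $C_{k}(q,\alpha)>0$ — and this case does not even use that $\{q_{n}\}$ is decreasing. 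For \emph{even} $k\ge4$ we have $(-1)^{k-1}=-1$, and since $|\binom{\alpha}{k}|-|\binom{u}{k}|\ge0$ and $P\le R$,
\begin{equation*}
C_{k}(q,\alpha)=\left|\binom{v}{k}\right|R-\left(\left|\binom{\alpha}{k}\right|-\left|\binom{u}{k}\right|\right)P\ \ge\ \left(\left|\binom{v}{k}\right|+\left|\binom{u}{k}\right|-\left|\binom{\alpha}{k}\right|\right)R .
\end{equation*}
Thus the lemma reduces to the \emph{second comparison} $|\binom{\alpha}{k}|<|\binom{u}{k}|+|\binom{v}{k}|$ for $k\ge3$, which I expect to be the crux.

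For $\alpha=\tfrac13$ the second comparison is immediate, since then $v=\tfrac13=\alpha$ and the right side exceeds $|\binom{\alpha}{k}|$ by $|\binom{2/3}{k}|>0$. For $\alpha\in(\tfrac13,1)$ I would pass to generating functions: from $\sum_{k\ge1}|\binom{s}{k}|x^{k}=1-(1-x)^{s}$ (valid for $0<s<1$) and $u=v+\alpha$ one obtains
\begin{equation*}
\sum_{k\ge1}\left(\left|\binom{u}{k}\right|+\left|\binom{v}{k}\right|-\left|\binom{\alpha}{k}\right|\right)x^{k}=1-(1-x)^{u}-(1-x)^{v}+(1-x)^{\alpha}=\bigl(1-(1-x)^{v}\bigr)\bigl(1+(1-x)^{\alpha}\bigr),
\end{equation*}
and, forming the Cauchy product of $1-(1-x)^{v}=\sum_{j\ge1}|\binom{v}{j}|x^{j}$ with $1+(1-x)^{\alpha}=2-\sum_{i\ge1}|\binom{\alpha}{i}|x^{i}$, the coefficient identity
\begin{equation*}
\left|\binom{u}{k}\right|+\left|\binom{v}{k}\right|-\left|\binom{\alpha}{k}\right|=2\left|\binom{v}{k}\right|-\sum_{j=1}^{k-1}\left|\binom{\alpha}{k-j}\right|\left|\binom{v}{j}\right| .
\end{equation*}
Hence the second comparison is equivalent to the convolution bound $\sum_{j=1}^{k-1}|\binom{\alpha}{k-j}|\,|\binom{v}{j}|<2|\binom{v}{k}|$.

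To prove this convolution bound I would play it off against the exactly computable diagonal convolution $\sum_{j=1}^{k-1}|\binom{v}{k-j}|\,|\binom{v}{j}|=2|\binom{v}{k}|-|\binom{2v}{k}|=2|\binom{v}{k}|-|\binom{1-\alpha}{k}|$, which is already $<2|\binom{v}{k}|$: the ratio $m\mapsto|\binom{\alpha}{m}|\big/|\binom{v}{m}|$ is decreasing (its step factor $\tfrac{m-\alpha}{m-v}<1$ since $\alpha>v$ when $\alpha>\tfrac13$), so $|\binom{\alpha}{m}|\le\tfrac{4\alpha}{1+\alpha}|\binom{v}{m}|$ for $m\ge2$, with only the single index $m=1$ carrying the larger factor $\tfrac{2\alpha}{1-\alpha}$, and the resulting slack is absorbed by the negative term $-|\binom{1-\alpha}{k}|$; this handles $k$ up to a bound depending on $\alpha$ (and, as $\alpha\to\tfrac13$, up to every $k$, matching the trivial case). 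For the remaining large $k$ one invokes $|\binom{s}{k}|\sim k^{-1-s}/|\Gamma(-s)|$ together with the strict decay gap $v<\alpha$, which forces $\sum_{j=1}^{k-1}|\binom{\alpha}{k-j}|\,|\binom{v}{j}|$ to be asymptotic to $|\binom{v}{k}|$ — comfortably below the target $2|\binom{v}{k}|$ — while a neighbourhood of $\alpha=1$ is dealt with by continuity, since there $|\binom{\alpha}{k}|\to0$ for $k\ge2$. The genuinely delicate point, and where I expect the real work, is making all these estimates uniform in $\alpha$ across the transitional range of $k$.
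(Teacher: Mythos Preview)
Your reduction is correct and in places cleaner than the paper's: the parity split, the sign analysis $\binom{s}{k}=(-1)^{k-1}|\binom{s}{k}|$, the ``first comparison'' $|\binom{\alpha}{k}|\ge|\binom{u}{k}|$ (via the increasing ratio, equal to $\tfrac{4\alpha}{1+\alpha}\ge1$ at $k=2$), and the reduction of the even case to the ``second comparison'' $|\binom{\alpha}{k}|<|\binom{u}{k}|+|\binom{v}{k}|$ are all exactly right. Your odd--$k$ argument is actually sharper than the paper's grouping $I_1+(I_2-I_3)$, since yours does not use monotonicity of $\{q_n\}$ at all.

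The genuine gap is the proof of the second comparison. Your generating-function factorisation $(1-(1-x)^v)(1+(1-x)^\alpha)$ and the resulting convolution identity are correct, but the subsequent plan---bounding $\sum_{j}|\binom{\alpha}{k-j}||\binom{v}{j}|$ against the diagonal convolution for ``small'' $k$, invoking $|\binom{s}{k}|\sim k^{-1-s}/|\Gamma(-s)|$ for ``large'' $k$, and appealing to continuity near $\alpha=1$---is only a sketch, and you yourself flag the uniformity in $\alpha$ as ``the real work''. As written this is not a proof: the crossover between the small-$k$ and large-$k$ regimes is not controlled, and near $\alpha=1$ all three quantities $|\binom{\alpha}{k}|$, $|\binom{u}{k}|$, $|\binom{v}{k}|$ vanish linearly in $1-\alpha$, so continuity alone says nothing.

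The paper closes this gap far more simply, and in fact uses an observation you already made. You noted that the ratio $|\binom{\alpha}{m}|/|\binom{v}{m}|$ is \emph{decreasing} in $m$ (step factor $\tfrac{m-\alpha}{m-v}<1$). Follow this to its conclusion: once the ratio drops to $\le1$ one has the \emph{stronger} inequality $|\binom{v}{k}|\ge|\binom{\alpha}{k}|$, which makes the second comparison immediate. The paper verifies that this happens uniformly at $k=8$, i.e.\ that $\gamma_1\prod_{i=1}^{7}(i-\gamma_1)\ge\alpha\prod_{i=1}^{7}(i-\alpha)$ for all $\alpha\in[\tfrac13,1)$ (a single-variable polynomial check), which settles every even $k\ge8$. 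The two remaining cases $k=4,6$ are dispatched by directly showing $|\binom{v}{k}|+|\binom{u}{k}|-|\binom{\alpha}{k}|>0$, again an explicit one-variable polynomial in $\alpha$. So instead of asymptotics and a uniformity argument, the whole even case reduces to three elementary polynomial inequalities in $\alpha$.
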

\begin{proof}
Denote $\gamma_{1}=\frac{1-\alpha}{2}$ and $\gamma_{2}=\frac{1+\alpha}{2}$ for $\alpha\in[\frac{1}{3},1)$. Then one can re-write $C_{k}(q,\alpha)$ as below:
\begin{align*}
C_{k}(q,\alpha)&=(-1)^{k-1}\frac{\alpha}{k!}\frac{q_{n}q_{n+1}^{k-1}}{Q_{n}^{k}}\displaystyle\prod_{i=1}^{k-1}(i-\alpha)
+\frac{\gamma_{1}}{k!}\frac{q_{n}^{k}}{Q_{n}^{k}}\displaystyle\prod_{i=1}^{k-1}(i-\gamma_{1})+(-1)^{k}\frac{\gamma_{2}}{k!}
\frac{q_{n}q_{n+1}^{k-1}}{Q_{n}^{k}}\displaystyle\prod_{i=1}^{k-1}(i-\gamma_{2}).
\end{align*}
We now divide $k\geq 3$, $k\in\mathbb{N}$ in two cases, one is odd natural number $k$, and another is even natural number $k$.\\
\emph{\textbf{Case I: $k$ is odd.}}\\
Note that when $k$ is odd then
\begin{align*}
C_{k}(q,\alpha)&=\frac{\alpha}{k!}\frac{q_{n}q_{n+1}^{k-1}}{Q_{n}^{k}}\displaystyle\prod_{i=1}^{k-1}(i-\alpha)
+\frac{\gamma_{1}}{k!}\frac{q_{n}^{k}}{Q_{n}^{k}}\displaystyle\prod_{i=1}^{k-1}(i-\gamma_{1})-\frac{\gamma_{2}}{k!}
\frac{q_{n}q_{n+1}^{k-1}}{Q_{n}^{k}}\displaystyle\prod_{i=1}^{k-1}(i-\gamma_{2})\\
&=I_{1}+I_{2}-I_{3},
\end{align*}where
$I_{1}=\frac{\alpha}{k!}\frac{q_{n}q_{n+1}^{k-1}}{Q_{n}^{k}}\displaystyle\prod_{i=1}^{k-1}(i-\alpha)$,
$I_{2}=\frac{\gamma_{1}}{k!}\frac{q_{n}^{k}}{Q_{n}^{k}}\displaystyle\prod_{i=1}^{k-1}(i-\gamma_{1})$, and
$I_{3}=\frac{\gamma_{2}}{k!}\frac{q_{n}q_{n+1}^{k-1}}{Q_{n}^{k}}\displaystyle\prod_{i=1}^{k-1}(i-\gamma_{2}).$\\
Now for odd $k\geq3$, and $\alpha\in[\frac{1}{3},1)$, it is immediate that $I_{1}>0$. But the following difference gives
\begin{align*}
I_{2}-I_{3}&=\frac{\gamma_{2}}{k!}\frac{q_{n}^{k}}{Q_{n}^{k}}\displaystyle\prod_{i=1}^{k-1}(i-\gamma_{2})\Big[\frac{\gamma_{1}}{\gamma_{2}}\displaystyle\prod_{i=1}^{k-1}
\big(\frac{i-\gamma_{1}}{i-\gamma_{2}}\big)-\frac{q_{n+1}^{k-1}}{q_{n}^{k-1}}\Big]\\
&=\frac{\gamma_{2}}{k!}\frac{q_{n}^{k}}{Q_{n}^{k}}\displaystyle\prod_{i=1}^{k-1}(i-\gamma_{2})L(q,\alpha),
\end{align*}where we denote $L(q,\alpha)=\frac{\gamma_{1}}{\gamma_{2}}\displaystyle\prod_{i=1}^{k-1}
\big(\frac{i-\gamma_{1}}{i-\gamma_{2}}\big)-\frac{q_{n+1}^{k-1}}{q_{n}^{k-1}}=\displaystyle\prod_{i=2}^{k-1}
\big(\frac{i-\gamma_{1}}{i-\gamma_{2}}\big)-\frac{q_{n+1}^{k-1}}{q_{n}^{k-1}}$. Since $\gamma_2>\gamma_1$, and $\{q_n\}$ is decreasing, so we have
\begin{align*}
&L(q,\alpha)>1-\frac{q_{n+1}^{k-1}}{q_{n}^{k-1}}=\frac{q_{n}^{k-1}-q_{n+1}^{k-1}}{q_{n}^{k-1}}\geq 0,
\end{align*}
which shows that $L(q,\alpha)>0$. Hence $I_{2}-I_{3}>0$. Therefore $C_{k}(q,\alpha)>0$ for all odd $k\geq3$, $k\in\mathbb{N}$.\\
\emph{\textbf{Case II: $k$ is even.}}\\
In this case, using the similar expressions for $I_1$, $I_2$, and $I_3$ in the earlier case, $C_{k}(q,\alpha)$ can be written as follows:
\begin{align*}
C_{k}(q,\alpha)&=-\frac{\alpha}{k!}\frac{q_{n}q_{n+1}^{k-1}}{Q_{n}^{k}}\displaystyle\prod_{i=1}^{k-1}(i-\alpha)+
\frac{\gamma_{1}}{k!}\frac{q_{n}^{k}}{Q_{n}^{k}}\displaystyle\prod_{i=1}^{k-1}(i-\gamma_{1})+
\frac{\gamma_{2}}{k!}\frac{q_{n}q_{n+1}^{k-1}}{Q_{n}^{k}}\displaystyle\prod_{i=1}^{k-1}(i-\gamma_{2})\\
&=-I_{1}+I_{2}+I_{3}\\
&=\frac{\gamma_{1}}{k!}\Big(\displaystyle\prod_{i=1}^{k-1}(i-\alpha)\Big)\Big[\frac{q_{n}^{k}}{Q_{n}^{k}}\displaystyle\prod_{i=1}^{k-1}\big(\frac{i-\gamma_{1}}{i-\alpha}\big)-
\frac{\alpha}{\gamma_{1}}\frac{q_{n}q_{n+1}^{k-1}}{Q_{n}^{k}}\Big]+I_{3}\\
&=\frac{\gamma_{1}}{k!}\Big(\displaystyle\prod_{i=1}^{k-1}(i-\alpha)\Big)M(q,\alpha)+I_{3},
\end{align*}where we denote $M(q,\alpha)=\frac{q_{n}^{k}}{Q_{n}^{k}}\displaystyle\prod_{i=1}^{k-1}\big(\frac{i-\gamma_{1}}{i-\alpha}\big)-\frac{\alpha}{\gamma_{1}}\frac{q_{n}q_{n+1}^{k-1}}{Q_{n}^{k}}$. Since $\frac{i-\gamma_{1}}{i-\alpha}\geq1$ for $\alpha\in[1/3,1)$ and $\{q_{n}\}$ is decreasing, so we have
\begin{align*}
M(q,\alpha)& \geq\frac{q_{n}^{k}M_{1}(\alpha)}{\gamma_{1}Q_{n}^{k}\displaystyle\prod_{i=1}^{7}(i-\alpha)}.
\end{align*}
where we choose $M_{1}(\alpha)$ as below
\begin{align*}
M_{1}(\alpha)&=\Big(\gamma_{1}\displaystyle\prod_{i=1}^{7}(i-\gamma_{1})-\alpha\displaystyle\prod_{i=1}^{7}(i-\alpha)\Big)\\
&=\Big(\frac{1-\alpha}{2}\displaystyle\prod_{i=1}^{7}(i-\frac{1-\alpha}{2})-\alpha\displaystyle\prod_{i=1}^{7}(i-\alpha)\Big)~~~(\because~~ \gamma_{1}=\frac{1-\alpha}{2}).
\end{align*}Simplify the terms of the above expression for $M_{1}(\alpha)$, we get
\begin{align}{\label{M-H-eq}}
M_{1}(\alpha)&=\frac{(1-\alpha)(3\alpha-1)(13-\alpha)}{2^{8}}H(\alpha)
\end{align}
where $H(\alpha)=85\alpha^{5}-1187\alpha^{4}+8654\alpha^{3}-24898\alpha^{2}+46941\alpha-10395$. Hence we have
\begin{align*}
H'(\alpha)&=425\alpha^{4}-4748\alpha^{3}+25962\alpha^{2}-49796\alpha+46941,\\
H''(\alpha)&=1700\alpha^{3}-14244\alpha^{2}+51924\alpha-49796,\\
H'''(\alpha)&=12(425\alpha^{2}-2374\alpha+4327).
\end{align*}
Clearly, $H'''(\alpha)>0$ for $\alpha\in[\frac{1}{3}, 1)$, which implies $H''(\alpha)$ is increasing. Also $H''(1)<0$. Hence $H''(\alpha)<0$ in $\alpha\in[\frac{1}{3}, 1)$ and since $H'(1)>0$, which means that $H'(\alpha)>0$ for any $\alpha\in[\frac{1}{3},1)$. Since $H(\frac{1}{3})>0$ so we conclude that $H(\alpha)>0$ in $[\frac{1}{3}, 1)$. Therefore $M_{1}(\alpha)\geq0$ in $\alpha\in[\frac{1}{3},1)$, which further gives $M(q,\alpha)\geq0$ for all even $k\geq8$ and $\alpha\in[\frac{1}{3},1)$. On the other hand, $I_{3}>0$ and hence  $C_{k}(q,\alpha)>0$ for all even $k\geq8$ and $\alpha\in[\frac{1}{3},1)$. Now for the two remaining even numbers $k=4$, and for $k=6$, $C_{k}(q,\alpha)$ has the following expressions
\begin{align*}
C_{4}(q,\alpha)&=\frac{\gamma_{1}}{4!}\frac{q_{n}^{4}}{Q_{n}^{4}}\displaystyle\prod_{i=1}^{3}(i-\gamma_{1})+
\frac{\gamma_{2}}{4!}\frac{q_{n}q_{n+1}^{3}}{Q_{n}^{4}}\displaystyle\prod_{i=1}^{3}
(i-\gamma_{2})-\frac{\alpha}{4!}\frac{q_{n}q_{n+1}^{3}}{Q_{n}^{4}}\displaystyle\prod_{i=1}^{3}(i-\alpha),\\
C_{6}(q,\alpha)&=\frac{\gamma_{1}}{6!}\frac{q_{n}^{6}}{Q_{n}^{6}}\displaystyle\prod_{i=1}^{5}(i-\gamma_{1})+
\frac{\gamma_{2}}{6!}\frac{q_{n}q_{n+1}^{5}}{Q_{n}^{6}}\displaystyle\prod_{i=1}^{5}
(i-\gamma_{2})-\frac{\alpha}{6!}\frac{q_{n}q_{n+1}^{5}}{Q_{n}^{6}}\displaystyle\prod_{i=1}^{5}(i-\alpha).
\end{align*}
Since $q_{n}\geq q_{n+1}$ for $n\in\mathbb{N}$ so one can write
\begin{align}{\label{c-4}}
C_{4}(q,\alpha)&\geq J_{1}(\alpha)\frac{q_{n}q_{n+1}^{3}}{Q_{n}^{4}},
\end{align} where we denote
\begin{align*}
J_{1}(\alpha)=&\frac{\gamma_{1}}{4!}\displaystyle\prod_{i=1}^{3}(i-\gamma_{1})+\frac{\gamma_{2}}{4!}\displaystyle
\prod_{i=1}^{3}(i-\gamma_{2})-\frac{\alpha}{4!}\displaystyle\prod_{i=1}^{3}(i-\alpha).
\end{align*}Plugging the values of $\gamma_{1}=\frac{1-\alpha}{2}$, $\gamma_{2}=\frac{1+\alpha}{2}$ in $J_{1}(\alpha)$ we get a simplified form as below
\begin{align*}
J_{1}(\alpha)=\frac{1}{4!8}(\alpha^2-6\alpha+5)(7\alpha^2-6\alpha+3).
\end{align*} It is clear that $J_1(\alpha)>0$ in $[\frac{1}{3}, 1)$ which shows that $C_{4}(q,\alpha)>0$ in $[\frac{1}{3}, 1)$.\\
Similarly one has
\begin{align}
C_{6}(q,\alpha)&\geq J_{2}(\alpha)\frac{q_{n}q_{n+1}^{5}}{Q_{n}^{6}},
\end{align}
where we denote
\begin{align*}
J_{2}(\alpha)=&\frac{\gamma_{1}}{6!}\displaystyle\prod_{i=1}^{5}(i-\gamma_{1})+\frac{\gamma_{2}}{6!}\displaystyle\prod_{i=1}^{5}
(i-\gamma_{2})-\frac{\alpha}{6!}\displaystyle\prod_{i=1}^{5}(i-\alpha).
\end{align*}Inserting the values of $\gamma_{1}$ and $\gamma_{2}$ in $J_{2}(\alpha)$, we obtain
\begin{align*}
J_{2}(\alpha)=&\frac{1}{6!32}(31\alpha^{6}-480\alpha^{5}+2515\alpha^{4}-7200\alpha^{3}+8029\alpha^{2}-3840\alpha+945).
\end{align*}Differentiating $J_{2}(\alpha)$ up to four times, we get
\begin{align*}
J'_{2}(\alpha)&=\frac{1}{6!32}(186\alpha^{5}-2400\alpha^{4}+10060\alpha^{3}-21600\alpha^{2}+16058\alpha-3840),\\
J''_{2}(\alpha)&=\frac{1}{6!32}(930\alpha^{4}-9600\alpha^{3}+30180\alpha^{2}-43200\alpha+16058),\\
J'''_{2}(\alpha)&=\frac{1}{6!32}(3720\alpha^{3}-28800\alpha^{2}+60360\alpha-43200),\\
J''''_{2}(\alpha)&=\frac{1}{192}(93\alpha^{2}-480\alpha+503).
\end{align*}\\
Note that $J''''_{2}(\alpha)>0$ in $[\frac{1}{3}, 1)$ which implies that $J'''_{2}(\alpha)$ is increasing in $[\frac{1}{3}, 1)$ but $J'''_{2}(1)<0$, which says that $J''_{2}(\alpha)$ is decreasing in $[\frac{1}{3}, 1)$ but $J''_{2}(\frac{27}{50})>0$ implies that $J''_{2}(\alpha)>0$ in $[\frac{1}{3}, \frac{27}{50}]$. That is $J'_{2}(\alpha)$ is increasing in $[\frac{1}{3}, \frac{27}{50}]$ but $J'_{2}(\frac{1}{3})>0$ and $J_{2}(\frac{1}{3})>0$, which finally gives $J_{2}(\alpha)>0$ in $[\frac{1}{3}, \frac{27}{50}]$. One can also write $J_{2}(\alpha)$ as $J_{2}(\alpha)=\frac{(9-\alpha)f(\alpha)}{6!32}$,
where $f(\alpha)=-31\alpha^{5}+201\alpha^{4}-706\alpha^{3}+846\alpha^{2}-415\alpha+105$. Then
\begin{align*}
 f'(\alpha)&=-155\alpha^{4}+804\alpha^{3}-2118\alpha^{2}+1692\alpha-415<-6(25\alpha^{4}-134\alpha^{3}+353\alpha^{2}-282\alpha+69).
 \end{align*}We observe that $25\alpha^{4}-134\alpha^{3}+353\alpha^{2}-282\alpha+69>0$ in $\alpha\in[27/50, 1)$. Therefore $f'(\alpha)<0$. Also since $f(1)=0$, so
 $f(\alpha)>0$ in $\alpha\in[27/50, 1)$. Consequently $J_{2}(\alpha)>0$ $\forall\alpha\in[27/50,1)$, and hence $J_{2}(\alpha)>0$ for $\alpha\in[\frac{1}{3}, 1)$. This gives $C_{6}(q,\alpha)>0$ for $\alpha\in[\frac{1}{3}, 1)$. Hence we prove the Lemma.
\end{proof}
Now we have another result.
\begin{Lemma}{\label{STRICTLYP}}
If $\{q_{n}\}$ is decreasing and $\alpha\in[\frac{1}{3},1)$, then $w_{n}(q, Q)>\frac{(\alpha-1)^{2}}{4}\frac{q_{n}}{Q_{n}^{2-\alpha}}.$
\end{Lemma}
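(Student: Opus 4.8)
The plan is to assemble the pieces already prepared in this subsection. Recall that with the choice $\mu_n = Q_n^{(1-\alpha)/2}$ one has $w_n(q,Q) = \frac{Q_n^\alpha}{q_n}T_n$, and that $T_n$ was expanded into the convergent series
\begin{align*}
T_n = \frac{1-\alpha^2}{8}\frac{q_n^2}{Q_n^2} - \frac{(1-\alpha)(3\alpha-1)}{8}\frac{q_n q_{n+1}}{Q_n^2} + \sum_{k=3}^{\infty} C_k(q,\alpha).
\end{align*}
Since the prefactor $\frac{Q_n^\alpha}{q_n}$ is strictly positive, it suffices to prove the strict inequality $T_n > \frac{(\alpha-1)^2}{4}\frac{q_n^2}{Q_n^2}$; multiplying this through by $\frac{Q_n^\alpha}{q_n}$ then gives $w_n(q,Q) > \frac{(\alpha-1)^2}{4}\frac{q_n}{Q_n^{2-\alpha}}$, which is the assertion.

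To bound $T_n$ from below I would use the two hypotheses exactly as in \eqref{T equation}. For $\alpha\in[\tfrac13,1)$ we have $(1-\alpha)(3\alpha-1)\ge 0$, and monotonicity of $\{q_n\}$ gives $q_{n+1}\le q_n$, so the middle term obeys $-\frac{(1-\alpha)(3\alpha-1)}{8}\frac{q_n q_{n+1}}{Q_n^2}\ge -\frac{(1-\alpha)(3\alpha-1)}{8}\frac{q_n^2}{Q_n^2}$. Combining with the first term and using the algebraic identity $\frac{1-\alpha^2}{8}-\frac{(1-\alpha)(3\alpha-1)}{8} = \frac{(1-\alpha)[(1+\alpha)-(3\alpha-1)]}{8} = \frac{(1-\alpha)^2}{4}$ yields $T_n \ge \frac{(\alpha-1)^2}{4}\frac{q_n^2}{Q_n^2} + \sum_{k=3}^{\infty} C_k(q,\alpha)$. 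It then only remains to invoke Lemma \ref{Positive}, which guarantees $C_k(q,\alpha) > 0$ for every integer $k\ge 3$; hence the tail series is a sum of strictly positive terms (and converges, being the tail of the binomial expansions of $(1+q_{n+1}/Q_n)^{\alpha}$, $(1-q_n/Q_n)^{(1-\alpha)/2}$ and $(1+q_{n+1}/Q_n)^{(1+\alpha)/2}$, whose arguments lie in the admissible range for a decreasing $\{q_n\}$), so it is strictly positive. Therefore $T_n > \frac{(\alpha-1)^2}{4}\frac{q_n^2}{Q_n^2}$ and the lemma follows.

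I expect no genuine obstacle at this stage: once Lemma \ref{Positive} is available the argument is essentially a one-line rearrangement. The entire difficulty of the decreasing case has been front-loaded into the positivity of $C_k(q,\alpha)$ — in particular the explicit polynomial estimates for $k=4$ and $k=6$ and the analysis of $M_1(\alpha)$ for even $k\ge 8$ carried out in the proof of that lemma — so here the only point requiring a word of care is to record that the binomial series defining $T_n$ converges, justifying the termwise comparison.
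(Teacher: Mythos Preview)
Your proposal is correct and follows exactly the paper's own argument: use the series expansion of $T_n$, apply the monotonicity of $\{q_n\}$ together with $(1-\alpha)(3\alpha-1)\ge 0$ to obtain \eqref{T equation}, invoke Lemma~\ref{Positive} for the strict positivity of the tail, and then multiply through by $Q_n^{\alpha}/q_n$. The only addition you make is the explicit remark on convergence of the binomial series, which is harmless extra care.
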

\begin{proof}
Using the Lemma \ref{Positive}, inequality (\ref{T equation}) implies that $T_{n}>\frac{(\alpha-1)^{2}}{4}\frac{q_{n}^{2}}{Q_{n}^{2}}$ for $n\geq 1$, $n\in \mathbb{N}$.\\
Since $w_{n}(q, Q)=\frac{Q_{n}^{\alpha}}{q_{n}}T_{n}$, which proves that
\begin{align*}
w_{n}(q, Q)& >\frac{(\alpha-1)^{2}}{4}\frac{q_{n}}{Q_{n}^{2-\alpha}}.
\end{align*}
This proves the lemma.
\end{proof}
We are now in a position to present main result in this subsection. In fact we have the following theorem.
\begin{Theorem}{\label{COPTHEO}}
Let $A_{n}\in C_{c}(\mathbb{N}_0)$ be such that $A_{0}=0$ and $\alpha\in[\frac{1}{3},1)$. Then for any decreasing sequence $\{q_{n}\}$ we have the following improved inequality
\begin{align}{\label{COPGENIMP}}
&\displaystyle\sum_{n=1}^{\infty}\frac{Q_{n}^{\alpha}|A_n-A_{n-1}|^{2}}{q_{n}}\geq\displaystyle\sum_{n=1}^{\infty}w_{n}(q,Q)|A_{n}|^{2}
>\frac{(\alpha-1)^{2}}{4}\displaystyle\sum_{n=1}^{\infty}\frac{q_{n}}{Q_{n}^{2-\alpha}}|A_{n}|^{2},
\end{align}
where
\begin{align*}
w_{n}(q,Q)& =\frac{Q_{n}^{\alpha}}{q_{n}}\Big[1+\frac{q_{n}}{q_{n+1}}\Big(1+\frac{q_{n+1}}{Q_{n}}\Big)^{\alpha}-\Big(1-\frac{q_{n}}{Q_{n}}\Big)^{\frac{1-\alpha}{2}}
-\frac{q_{n}}{q_{n+1}}\Big(1+\frac{q_{n+1}}{Q_{n}}\Big)^{\frac{1+\alpha}{2}}\Big].
\end{align*}
\end{Theorem}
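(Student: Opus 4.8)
The plan is to read off the left-hand inequality in \eqref{COPGENIMP} from the general improved Hardy inequality \eqref{GIMHI} by making a single judicious choice of the two free sequences there, and to read off the strict right-hand inequality from Lemma \ref{STRICTLYP}, which is already in hand. Concretely, in \eqref{GIMHI} I would take $\lambda_n=\dfrac{q_n}{Q_n^{\alpha}}$ and $\mu_n=Q_n^{\frac{1-\alpha}{2}}$. This is an admissible choice: since $q_n>0$ we get $\lambda_n>0$ and $\mu_n>0$ for all $n\in\mathbb N$, while the empty sum $Q_0=0$ gives $\mu_0=0$, exactly as \eqref{GIMHI} requires. With this choice the left-hand side of \eqref{GIMHI} is literally $\displaystyle\sum_{n=1}^{\infty}\frac{Q_n^{\alpha}}{q_n}|A_n-A_{n-1}|^{2}$, the left-hand side of \eqref{COPGENIMP}.

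The one genuine computation is to verify that $w_n(\lambda,\mu)=\frac{1}{\lambda_n}+\frac{1}{\lambda_{n+1}}-\frac{\mu_{n-1}}{\lambda_n\mu_n}-\frac{\mu_{n+1}}{\lambda_{n+1}\mu_n}$ collapses to the claimed $w_n(q,Q)$. Using $\frac{1}{\lambda_n}=\frac{Q_n^{\alpha}}{q_n}$, $\frac{1}{\lambda_{n+1}}=\frac{Q_n^{\alpha}}{q_n}\cdot\frac{q_n}{q_{n+1}}\bigl(1+\frac{q_{n+1}}{Q_n}\bigr)^{\alpha}$, $\frac{\mu_{n-1}}{\mu_n}=\bigl(1-\frac{q_n}{Q_n}\bigr)^{\frac{1-\alpha}{2}}$ and $\frac{\mu_{n+1}}{\mu_n}=\bigl(1+\frac{q_{n+1}}{Q_n}\bigr)^{\frac{1-\alpha}{2}}$, the third and fourth terms become $\frac{Q_n^{\alpha}}{q_n}\bigl(1-\frac{q_n}{Q_n}\bigr)^{\frac{1-\alpha}{2}}$ and $\frac{Q_n^{\alpha}}{q_n}\cdot\frac{q_n}{q_{n+1}}\bigl(1+\frac{q_{n+1}}{Q_n}\bigr)^{\frac{1+\alpha}{2}}$ respectively; factoring out $\frac{Q_n^{\alpha}}{q_n}$ leaves exactly the bracket $T_n$ displayed in the preamble to this subsection, i.e. $w_n(\lambda,\mu)=\frac{Q_n^{\alpha}}{q_n}T_n=w_n(q,Q)$. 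The $n=1$ case is consistent since $Q_1=q_1$ forces both $\bigl(1-\frac{q_1}{Q_1}\bigr)^{\frac{1-\alpha}{2}}=0$ and $\mu_0=0$. This gives $\displaystyle\sum_{n=1}^{\infty}\frac{Q_n^{\alpha}}{q_n}|A_n-A_{n-1}|^{2}\ge\sum_{n=1}^{\infty}w_n(q,Q)|A_n|^{2}$.

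For the strict inequality I would simply invoke Lemma \ref{STRICTLYP}: for a decreasing $\{q_n\}$ and $\alpha\in[\tfrac13,1)$ it gives the pointwise bound $w_n(q,Q)>\frac{(\alpha-1)^{2}}{4}\frac{q_n}{Q_n^{2-\alpha}}$ for every $n\in\mathbb N$. Since $A\in C_c(\mathbb N_0)$ only finitely many $|A_n|^{2}$ are nonzero, so (for $A\not\equiv 0$) at least one term strictly increases and the rest are unchanged; summing the pointwise inequalities therefore yields $\displaystyle\sum_{n=1}^{\infty}w_n(q,Q)|A_n|^{2}>\frac{(\alpha-1)^{2}}{4}\sum_{n=1}^{\infty}\frac{q_n}{Q_n^{2-\alpha}}|A_n|^{2}$. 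Chaining the two displays proves \eqref{COPGENIMP}.

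I do not anticipate a real obstacle at this stage: all of the analytic content — positivity of the tail $\sum_{k\ge3}C_k(q,\alpha)$, hence $T_n>\frac{(\alpha-1)^{2}}{4}\frac{q_n^{2}}{Q_n^{2}}$ via \eqref{T equation} — is already packaged into Lemma \ref{Positive} and Lemma \ref{STRICTLYP}. The only points that need care are the bookkeeping in the identity $w_n(\lambda,\mu)=w_n(q,Q)$ (especially the $n=1$ boundary values), the fact that decreasing-ness of $\{q_n\}$ is what makes $\frac{q_{n+1}}{Q_n}\le 1$ so that the binomial expansion underlying $T_n$ is legitimate, and the routine observation that strictness in \eqref{COPGENIMP} presupposes $A\not\equiv 0$.
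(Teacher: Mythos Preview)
Your proposal is correct and follows exactly the paper's own argument: choose $\lambda_n=q_n/Q_n^{\alpha}$, $\mu_n=Q_n^{(1-\alpha)/2}$ in \eqref{GIMHI} to get the left inequality, and invoke Lemma~\ref{STRICTLYP} (built on Lemma~\ref{Positive} and \eqref{T equation}) for the strict right inequality. You supply more bookkeeping detail (the verification that $w_n(\lambda,\mu)=w_n(q,Q)$, the $n=1$ boundary case, the remark that strictness needs $A\not\equiv0$) than the paper's two-line proof, but the route is identical.
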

\begin{proof}
The LHS of (\ref{COPGENIMP}) is immediately followed by choosing $\lambda_{n}=\frac{q_{n}}{Q_{n}^{\alpha}}$ and $\mu_{n}=Q_{n}^{\frac{1-\alpha}{2}}$ in inequality (\ref{GIMHI}). The RHS of inequality (\ref{COPGENIMP}) is achieved from Lemma \ref{STRICTLYP}. This completes proof of the theorem.
\end{proof}

\subsection{For some increasing sequences $\{q_n\}$}
In this segment, we discuss the improvement of the Copson inequality (\ref{alphaCOPI}) for some increasing sequences such as $q_n=n$, and $q_n=n^3$ for $n\in \mathbb{N}$. We are not able to address the improvement study for an arbitrary increasing sequence $\{q_n\}$ in this paper. But there is a possibility of improvement of Copson inequality for increasing sequence. Indeed, we prove that Copson inequality (\ref{alphaCOPI}) achieve an improvement for $q_n=n$, and $q_n=n^3$ for $n\in \mathbb{N}$.
\subsubsection{For increasing sequence $\{n\}$:}
Let us choose $q_n=n$, $n\in \mathbb{N}$. Then we prove the following result, statement of which is given as follows.
\begin{Theorem}{\label{COP-n}}
Let $A_{n}\in C_{c}(\mathbb{N}_0)$ be such that $A_{0}=0$, then for any $\alpha\in[0,1)$ we have the following improved inequality
\begin{align}{\label{COP-ninq}}
&\displaystyle\sum_{n=1}^{\infty}\frac{S_{n}^{\alpha}|A_n-A_{n-1}|^{2}}{n}\geq\displaystyle\sum_{n=1}^{\infty}w_{n}(n,S)|A_{n}|^{2}>\frac{(\alpha-1)^{2}}{4}
\displaystyle\sum_{n=1}^{\infty}\frac{n}{S_{n}^{2-\alpha}}|A_{n}|^{2}
\end{align}
where the weight sequence $w_{n}(n, S)$ is defined as below
\begin{align*}
w_{n}(n, S)& =\frac{S_{n}^{\alpha}}{n}\Big[1+\frac{n}{n+1}\Big(\frac{n+2}{n}\Big)^{\alpha}-\Big(\frac{n-1}{n+1}\Big)^{\frac{1-\alpha}{2}}-\frac{n}{n+1}
\Big(\frac{n+2}{n}\Big)^{\frac{1+\alpha}{2}}\Big].
\end{align*}
\end{Theorem}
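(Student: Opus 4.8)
The statement contains two assertions, which I would prove separately. The refinement (the left inequality in (\ref{COP-ninq})) is immediate from the master inequality (\ref{GIMHI}): take $\lambda_n=n/S_n^{\alpha}$ and $\mu_n=S_n^{\frac{1-\alpha}{2}}$, both strictly positive with $\mu_0=0$. Then $1/\lambda_n=S_n^{\alpha}/n$, and since $S_n=\tfrac{n(n+1)}{2}$ gives $\frac{S_{n+1}}{S_n}=\frac{n+2}{n}$ and $\frac{S_{n-1}}{S_n}=\frac{n-1}{n+1}$, the weight $w_n(\lambda,\mu)$ of (\ref{GIMHI}) simplifies to exactly the stated $w_n(n,S)$; this costs nothing and holds for every $\alpha\in[0,1)$.

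For the improvement (the right inequality in (\ref{COP-ninq})) write $w_n(n,S)=\frac{S_n^{\alpha}}{n}T_n$, with $T_n$ the bracketed factor; using $\frac{n^2}{S_n^2}=\frac{4}{(n+1)^2}$, the claim is equivalent to the scalar inequalities $T_n>\frac{(\alpha-1)^2}{(n+1)^2}$ for every $n\in\mathbb{N}$. I would put $x=\frac{1}{n+1}\in(0,\tfrac12]$, so that $\frac{q_n}{Q_n}=2x$, $\frac{q_{n+1}}{Q_n}=\frac{2x}{1-x}$ and $\frac{q_n}{q_{n+1}}=1-x$; then $T_n-\frac{(\alpha-1)^2}{(n+1)^2}=G(x)$, where
\begin{align*}
G(x)=1+(1-x)^{1-\alpha}(1+x)^{\alpha}-(1-2x)^{\frac{1-\alpha}{2}}-(1-x)^{\frac{1-\alpha}{2}}(1+x)^{\frac{1+\alpha}{2}}-(\alpha-1)^2 x^2,
\end{align*}
and it suffices to show $G(x)>0$ on $(0,\tfrac12]$ (the endpoint $x=\tfrac12$ being exactly the case $n=1$). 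One checks $G(0)=G'(0)=G''(0)=0$, so, following the scheme of the theorem of Section 2, the plan is to establish $G'''\ge 0$ on $[0,\tfrac12]$, which then forces $G''>0$, then $G'>0$, then $G>0$ on $(0,\tfrac12]$.

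The main obstacle is precisely this last step. The coefficient of $x^3$ in $G$ works out to $-\tfrac12(1-\alpha)(2\alpha^2-3\alpha-1)$, which is positive for $\alpha\in[0,1)$ (both roots of $2\alpha^2-3\alpha-1$ lie outside $[0,1)$), so $G'''(0)>0$; the real work is to keep $G'''$ from changing sign over the whole interval $[0,\tfrac12]$, uniformly in $\alpha$, and this reduces to sign conditions for explicit polynomials in $\alpha$ — the analogue of the $J_1,J_2$ computations in Lemma \ref{Positive}, but now with no termwise positivity to fall back on. Equivalently, expanding $T_n$ by the binomial series exactly as in Subsection 3.1 (with $q_n=n$, $Q_n=S_n$, and $C_k(q,\alpha)$ as in (\ref{CK-equation})) gives $T_n=\frac{1-\alpha^2}{8}\frac{q_n^2}{Q_n^2}-\frac{(1-\alpha)(3\alpha-1)}{8}\frac{q_nq_{n+1}}{Q_n^2}+\sum_{k\ge3}C_k(q,\alpha)$, whence, since $\frac{q_n^2}{Q_n^2}=\frac{4}{(n+1)^2}$ and $\frac{q_nq_{n+1}}{Q_n^2}=\frac{4}{n(n+1)}$,
\begin{align*}
T_n-\frac{(\alpha-1)^2}{(n+1)^2}=-\frac{(1-\alpha)(3\alpha-1)}{2n(n+1)^2}+\sum_{k=3}^{\infty}C_k(q,\alpha);
\end{align*}
so the point becomes to show $\sum_{k\ge3}C_k(q,\alpha)>\frac{(1-\alpha)(3\alpha-1)}{2n(n+1)^2}$, and the difficulty is the delicate $O(n^{-3})$ cancellation together with the fact that, for the increasing sequence $q_n=n$, the $C_k(q,\alpha)$ are not all positive (the sign of $q_n^{k-1}-q_{n+1}^{k-1}$ flips), so the series has to be estimated as a whole rather than term by term.
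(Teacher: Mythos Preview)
Your reduction is correct and agrees with the paper: the left inequality is exactly (\ref{GIMHI}) with $\lambda_n=n/S_n^{\alpha}$, $\mu_n=S_n^{(1-\alpha)/2}$, and for the right inequality you correctly rewrite the claim as $T_n>\frac{(\alpha-1)^2}{(n+1)^2}$. Your substitution $x=\frac{1}{n+1}$ is a neat variant of the paper's $x=\frac{1}{n}$ (the paper treats $n=1$ separately and, for $n\ge2$, writes the difference as $M(x)=N(x)/(x+1)$ with $N(0)=N'(0)=N''(0)=0$); your choice absorbs $n=1$ into the interval $(0,\tfrac12]$ and the vanishing $G(0)=G'(0)=G''(0)=0$ is the same mechanism.

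The gap is that you stop at the obstacle instead of resolving it. Observing $G'''(0)>0$ and saying ``the real work is to keep $G'''$ from changing sign'' is not a proof; neither is the series reformulation, which you likewise leave at ``the series has to be estimated as a whole.'' In the paper this is precisely the step that is carried out, and it is also where the range of $\alpha$ gets restricted. After computing $N'''(x)$ the paper isolates the only possibly negative combination,
\[
8\alpha(1-\alpha)(2-\alpha)(1+2x)^{\alpha-3}-(3-\alpha)(1-\alpha^2)(1+2x)^{\frac{\alpha-5}{2}},
\]
and Lemma~\ref{17/50} shows it is positive on $[0,\tfrac12]$ exactly for $\alpha\in[17/50,1)$; then $N'''>0$ follows, hence $N>0$ on $(0,\tfrac12]$, hence Lemma~\ref{W-grater}. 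Note in particular that the stated range $\alpha\in[0,1)$ in the theorem is a slip (compare the abstract and introduction): what is actually proved is $\alpha\in[17/50,1)$. So your third--derivative plan cannot succeed uniformly on $[0,1)$ with the paper's decomposition---for small $\alpha$ the combination above is genuinely negative (it vanishes at $\alpha=0$ only through the first summand), and an analogue of Lemma~\ref{17/50} is what you are missing. The series route faces the same wall: with $q_n=n$ increasing, the terms $C_k(q,\alpha)$ are not all positive, and the needed lower bound on the tail has to beat a signed $O(n^{-3})$ deficit whose sign flips at $\alpha=1/3$; you would still need a concrete estimate, not just the observation that one is required.
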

To establish this theorem, we need some results given in the form of lemma. Let us begin with the following lemma.
\begin{Lemma}{\label{17/50}}
If $\alpha\in[\frac{17}{50}, 1)$ then we have
\begin{align}{\label{17/50inq}}
&8\alpha(1-\alpha)(2-\alpha)(1+2x)^{\alpha-3}>(3-\alpha)(1-\alpha^{2})(1+2x)^{\frac{\alpha-5}{2}}~~~~\forall x\in[0,\frac{1}{2}].
\end{align}
\end{Lemma}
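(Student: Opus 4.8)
The plan is to eliminate the variable $x$ first, turning (\ref{17/50inq}) into a one-variable inequality in $\alpha$, and then to settle that inequality by a monotonicity argument together with a single sharp numerical estimate. For $\alpha\in[\tfrac{17}{50},1)$ and $x\in[0,\tfrac12]$ both sides of (\ref{17/50inq}) are strictly positive, so I would divide through by $(1+2x)^{\alpha-3}$. Since
\begin{align*}
\frac{\alpha-5}{2}-(\alpha-3)=\frac{1-\alpha}{2},
\end{align*}
(\ref{17/50inq}) becomes equivalent to
\begin{align*}
8\alpha(1-\alpha)(2-\alpha)>(3-\alpha)(1-\alpha^2)(1+2x)^{\frac{1-\alpha}{2}}.
\end{align*}
Because $\alpha<1$ the exponent $\frac{1-\alpha}{2}$ is positive, so the right-hand side is increasing in $x$ on $[0,\tfrac12]$ and is largest at $x=\tfrac12$, where $(1+2x)^{\frac{1-\alpha}{2}}=2^{\frac{1-\alpha}{2}}$. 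Hence it suffices to prove
\begin{align*}
8\alpha(1-\alpha)(2-\alpha)>(3-\alpha)(1-\alpha^2)\,2^{\frac{1-\alpha}{2}};
\end{align*}
writing $1-\alpha^2=(1-\alpha)(1+\alpha)$ and cancelling $1-\alpha>0$, this is exactly $g(\alpha)>0$ on $[\tfrac{17}{50},1)$, where $g(\alpha):=8\alpha(2-\alpha)-(3-\alpha)(1+\alpha)\,2^{\frac{1-\alpha}{2}}$.

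Next I would show $g$ is strictly increasing on $[\tfrac{17}{50},1)$. Differentiating and regrouping gives
\begin{align*}
g'(\alpha)=(1-\alpha)\Big(16-2\cdot 2^{\frac{1-\alpha}{2}}\Big)+\frac{\ln 2}{2}\,(3+2\alpha-\alpha^2)\,2^{\frac{1-\alpha}{2}}.
\end{align*}
On $[0,1)$ we have $2^{\frac{1-\alpha}{2}}<2$, so $16-2\cdot 2^{\frac{1-\alpha}{2}}>0$, and $3+2\alpha-\alpha^2=-(\alpha-3)(\alpha+1)>0$; thus both summands of $g'$ are positive, whence $g'(\alpha)>0$. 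Consequently $g(\alpha)\ge g(\tfrac{17}{50})$ on the whole range, and it remains only to verify $g(\tfrac{17}{50})>0$.

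For the numerical check, with $8\cdot\tfrac{17}{50}\big(2-\tfrac{17}{50}\big)=\tfrac{11288}{2500}$ and $\big(3-\tfrac{17}{50}\big)\big(1+\tfrac{17}{50}\big)=\tfrac{8911}{2500}$, the inequality $g(\tfrac{17}{50})>0$ is equivalent to $2^{33/100}<\tfrac{11288}{8911}$. I would prove this through the chain
\begin{align*}
2^{33/100}<2^{1/3}<\frac{63}{50}<\frac{11288}{8911},
\end{align*}
where the first step uses $\tfrac{33}{100}<\tfrac13$ and monotonicity of $t\mapsto 2^t$, the second uses $\big(\tfrac{63}{50}\big)^3=\tfrac{250047}{125000}>2$, and the third uses $63\cdot 8911=561393<564400=50\cdot 11288$. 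Combining the three stages then yields (\ref{17/50inq}) for all $x\in[0,\tfrac12]$ and $\alpha\in[\tfrac{17}{50},1)$.

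None of the three stages is deep, but the threshold $\tfrac{17}{50}$ is essentially forced by the last one: the estimate $2^{33/100}<\tfrac{63}{50}$ works but is tight, and a cruder bound would fail, so the numerical comparison is the part that must be carried out with care. One should also double-check the reduction in the first stage, namely that $\frac{1-\alpha}{2}\ge 0$ on the relevant range, which is precisely what makes $x=\tfrac12$ the worst case.
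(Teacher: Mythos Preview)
Your proof is correct and follows essentially the same route as the paper: reduce to the worst case $x=\tfrac12$ by monotonicity in $x$, then show the resulting one-variable function is increasing in $\alpha$ and positive at $\alpha=\tfrac{17}{50}$. The only cosmetic difference is that the paper takes logarithms first (working with $G(\alpha)=\log\frac{8\alpha(2-\alpha)}{2^{(1-\alpha)/2}(3-\alpha)(1+\alpha)}$, whose derivative simplifies neatly to $\frac{6(1-\alpha)}{\alpha(1+\alpha)(2-\alpha)(3-\alpha)}+\tfrac{\log 2}{2}>0$), whereas you work directly with the difference $g(\alpha)$; your explicit numerical chain $2^{33/100}<2^{1/3}<\tfrac{63}{50}<\tfrac{11288}{8911}$ is in fact more detailed than the paper, which simply asserts $G(17/50)>0$.
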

\begin{proof}
Suppose that $\alpha\in[\frac{17}{50}, 1)$ and $x\in[0,\frac{1}{2}]$. To prove above inequality (\ref{17/50inq}) it is equivalent to show that
\begin{align*}
 &\log\Big(8\alpha(1-\alpha)(2-\alpha)(1+2x)^{\alpha-3}\Big)>\log\Big((3-\alpha)(1-\alpha^{2})(1+2x)^{\frac{\alpha-5}{2}}\Big).
 \end{align*}
Let us denote $P(x,\alpha)$ by the following difference.
\begin{align*}
 P(x,\alpha)&=\log\Big(8\alpha(1-\alpha)(2-\alpha)\Big)-(3-\alpha)\log(1+2x)-\log\Big((3-\alpha)(1-\alpha^{2})\Big)\\
 & \hspace{0.5cm}+\frac{5-\alpha}{2}\log(1+2x)\\
&=\log\Big(\frac{8\alpha(2-\alpha)}{(3-\alpha)(1+\alpha)}\Big)-\frac{1-\alpha}{2}\log(1+2x).
 \end{align*}
Differentiating $P(x,\alpha)$ with respect to $x$, we get
\begin{align*}
\frac{\partial{P(x,\alpha)}}{\partial{x}}=&-\frac{1-\alpha}{1+2x}<0,
\end{align*}
which shows that $P(x,\alpha)$ is decreasing in $x$ for any $\alpha\in (0, 1)$. Also note that $P(1/2, \alpha)=\log\Big(\frac{8\alpha(2-\alpha)}{2^{\frac{1-\alpha}{2}}(3-\alpha)(1+\alpha)}\Big)$. We further denote $P(1/2, \alpha)=G(\alpha)$, where
\begin{align}
G(\alpha)=&\log\Big(\frac{8\alpha(2-\alpha)}{2^{\frac{1-\alpha}{2}}(3-\alpha)(1+\alpha)}\Big) \nonumber\\
=&\log(8\alpha)+\log(2-\alpha)-\log(3-\alpha)-\log(1+\alpha)-\frac{1-\alpha}{2}\log2.
\end{align}
Differentiating $G(\alpha)$ with respect to $\alpha$, then after simplification one obtains
\begin{align*}
G'(\alpha)=\frac{6(1-\alpha)}{\alpha(\alpha+1)(2-\alpha)(3-\alpha)}+\frac{\log2}{2}>0~~~~~~\hspace{0.5cm}\forall\alpha\in(0,1).
\end{align*}
This shows that $G(\alpha)$ is increasing. Again since $G(17/50)>0$ which means that $G(\alpha)>0$ for $\alpha\in[\frac{17}{50}, 1)$ which implies $P(1/2,\alpha)>0$ within the specified domain. Therefore $P(x, \alpha)>0$ for $x\in[0,\frac{1}{2}]$ and $\alpha\in[\frac{17}{50}, 1)$. Hence the lemma is proved.
\end{proof}
In the next result, we obtain a lower bound of $w_{n}(n, S)$. In fact we have the following result.
\begin{Lemma}{\label{W-grater}}
Suppose that $\alpha\in[17/50, 1)$ and $n\in\mathbb{N}$. Then
\begin{align*}
&w_{n}(n,S)>\frac{(\alpha-1)^{2}}{4}\frac{n}{S_{n}^{2-\alpha}}~~~~\mbox{holds}.
\end{align*}
\end{Lemma}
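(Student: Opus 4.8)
\emph{Overall strategy.} The plan is to reduce the assertion to an elementary one-variable inequality on $[0,\tfrac12]$ and then mimic the third-derivative comparison used in the proof of the power-weight theorem of Section~2, with Lemma~\ref{17/50} supplying the crucial sign. Since $q_n=n$ gives $S_n=\frac{n(n+1)}{2}$, one has $w_n(n,S)=\frac{S_n^{\alpha}}{n}T_n$, where $T_n$ is the bracketed factor in the statement, and $\frac{n^2}{S_n^{2}}=\frac{4}{(n+1)^2}$; hence $w_n(n,S)>\frac{(\alpha-1)^2}{4}\frac{n}{S_n^{2-\alpha}}$ is equivalent to $T_n>\frac{(1-\alpha)^2}{(n+1)^2}$. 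For $n=1$ this is immediate, because
\begin{align*}
T_1=1+\tfrac{3^{\alpha}}{2}-\tfrac{3^{(1+\alpha)/2}}{2}=\tfrac58+\tfrac12\bigl(3^{\alpha/2}-\tfrac{\sqrt3}{2}\bigr)^{2}\ge\tfrac58>\tfrac14\ge\tfrac{(1-\alpha)^2}{4}.
\end{align*}

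\emph{The substitution.} For $n\ge2$ I would set $x=\frac1n\in(0,\tfrac12]$ and use $\frac{q_{n+1}}{Q_n}=2x$, $\frac{q_n}{Q_n}=\frac{2x}{1+x}$, $\frac{q_n}{q_{n+1}}=\frac1{1+x}$ to write $T_n=T(x)$. Multiplying through by $1+x$ and using $(1+x)\bigl(\frac{1-x}{1+x}\bigr)^{\frac{1-\alpha}{2}}=(1+x)^{\frac{1+\alpha}{2}}(1-x)^{\frac{1-\alpha}{2}}$, this turns into
\begin{align*}
\widehat T(x):=(1+x)\,T(x)=(1+x)+(1+2x)^{\alpha}-(1+2x)^{\frac{1+\alpha}{2}}-(1+x)^{\frac{1+\alpha}{2}}(1-x)^{\frac{1-\alpha}{2}}.
\end{align*}
Since $T_n=\widehat T(x)/(1+x)$ and $\frac1{(n+1)^2}=\frac{x^2}{(1+x)^2}$, it remains to show $\widehat T(x)>\frac{(1-\alpha)^2x^2}{1+x}$ for all $x\in(0,\tfrac12]$.

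\emph{The comparison.} Set $\psi(x)=\widehat T(x)-\frac{(1-\alpha)^2x^2}{1+x}$. First I would verify $\psi(0)=\psi'(0)=\psi''(0)=0$; the only non-obvious one is $\widehat T''(0)=2(1-\alpha)^2$, which matches the second derivative of $\frac{(1-\alpha)^2x^2}{1+x}$ at $0$ and is precisely what makes the comparison tight to leading order. It then suffices to prove $\psi'''>0$ on $[0,\tfrac12]$, for then $\psi''>0$, $\psi'>0$, and $\psi>0$ on $(0,\tfrac12]$ in succession, exactly as in Section~2. Differentiating $\widehat T$ three times, the two pure powers of $1+2x$ contribute $8\alpha(1-\alpha)(2-\alpha)(1+2x)^{\alpha-3}-(3-\alpha)(1-\alpha^2)(1+2x)^{\frac{\alpha-5}{2}}$, which is positive on $[0,\tfrac12]$ for $\alpha\in[\tfrac{17}{50},1)$ precisely by Lemma~\ref{17/50}; the mixed term $h(x):=(1+x)^{\frac{1+\alpha}{2}}(1-x)^{\frac{1-\alpha}{2}}$ satisfies, by logarithmic differentiation, $\frac{h'}{h}=\frac{\alpha-x}{1-x^2}$, whence $h''(x)=\frac{(\alpha^2-1)h(x)}{(1-x^2)^2}$ and $h'''(x)=\frac{(\alpha^2-1)(\alpha+3x)h(x)}{(1-x^2)^3}<0$, so $-h$ contributes a positive third derivative; finally $-\frac{(1-\alpha)^2x^2}{1+x}$ contributes $6(1-\alpha)^2(1+x)^{-4}>0$. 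Adding these, $\psi'''>0$ on $[0,\tfrac12]$; unwinding the substitution gives $T_n=T(\tfrac1n)>\frac{(1-\alpha)^2}{(n+1)^2}$ for $n\ge2$, and together with the case $n=1$ this yields $w_n(n,S)>\frac{(\alpha-1)^2}{4}\frac{n}{S_n^{2-\alpha}}$ for every $n\in\mathbb{N}$.

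\emph{Main obstacle.} The crux is the positivity of $\psi'''$, and within it the combination of the two pure powers of $1+2x$: their combined third derivative is exactly the quantity controlled by Lemma~\ref{17/50}, and it is there --- and only there --- that the restriction $\alpha\ge\tfrac{17}{50}$ enters. The supporting computations, the closed form $h'''(x)=(\alpha^2-1)(\alpha+3x)h(x)(1-x^2)^{-3}$ (showing the mixed term is suitably concave) and the values $\widehat T(0)=\widehat T'(0)=0$, $\widehat T''(0)=2(1-\alpha)^2$, are routine but need care, since it is the last of these that licenses comparing $\widehat T$ against $\frac{(1-\alpha)^2x^2}{1+x}$ rather than a coarser quadratic.
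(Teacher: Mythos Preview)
Your proof is correct and follows essentially the same route as the paper: write $w_n(n,S)=\frac{S_n^{\alpha}}{n}T_n$, reduce to $T_n>\frac{(1-\alpha)^2}{(n+1)^2}$, multiply by $1+x$ to obtain the paper's function $N(x)=\psi(x)$, verify $N(0)=N'(0)=N''(0)=0$, and show $N'''>0$ by splitting off exactly the two $(1+2x)$-terms handled by Lemma~\ref{17/50} together with the mixed term and the remainder $6(1-\alpha)^2(1+x)^{-4}$. Your treatment of $n=1$ via the identity $T_1=\tfrac{5}{8}+\tfrac{1}{2}\bigl(3^{\alpha/2}-\tfrac{\sqrt3}{2}\bigr)^2\ge\tfrac{5}{8}>\tfrac{(1-\alpha)^2}{4}$ is in fact cleaner than the paper's monotonicity argument (and incidentally works for all $\alpha\in[0,1)$), and your use of logarithmic differentiation to obtain $h'''(x)=(\alpha^2-1)(\alpha+3x)h(x)(1-x^2)^{-3}$ reproduces the paper's simplified expression for the mixed part of $N'''$ more transparently.
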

\begin{proof}
First we observe that $w_{n}(n,S)=\frac{S_{n}^{\alpha}}{n}P(n)$, where $P(n)$ has the following form
\begin{align*}
P(n)&=1+\frac{1}{(1+\frac{1}{n})}\Big(1+\frac{2}{n}\Big)^{\alpha}-\Big(\frac{1-\frac{1}{n}}{1+\frac{1}{n}}\Big)^{\frac{1-\alpha}{2}}
-\frac{1}{(1+\frac{1}{n})}\Big(1+\frac{2}{n}\Big)^{\frac{1+\alpha}{2}}.
\end{align*}Now it is enough to show that for each $n\in\mathbb{N}$ the following is holds:
\begin{align}{\label{p(n)geq}}
&P(n)>\frac{(\alpha-1)^{2}}{4}\frac{n^{2}}{S_{n}^{2}}=\frac{(\alpha-1)^{2}}{(n+1)^{2}}.
\end{align}
For $n=1$ in inequality (\ref{p(n)geq}), we denote the difference as $T(\alpha)$, where
\begin{align}
T(\alpha)&=1+\frac{3^{\alpha}-3^{\frac{\alpha+1}{2}}}{2}-\frac{(\alpha-1)^{2}}{4},
\end{align}
which gives $T'(\alpha)=\frac{1-\alpha}{2}+3^{\alpha}\frac{\log3}{4}(2-3^{\frac{1-\alpha}{2}})>0$ for $\alpha\in[17/50, 1)$. Again we see that $T(17/50)>0$ which implies that $T(\alpha)>0$ for any $\alpha\in[17/50,1)$.\\
For $n\geq2$, let us consider a real valued function on $[0,1/2]$ defined as below
\begin{align*}
M(x)&=1+\frac{1}{(1+x)}\Big(1+2x\Big)^{\alpha}-\Big(\frac{1-x}{1+x}\Big)^{\frac{1-\alpha}{2}}-\frac{1}{(1+x)}\Big(1+2x\Big)^{\frac{1+\alpha}{2}}-\frac{(\alpha-1)^{2}x^{2}}{(x+1)^{2}}
\end{align*}
After some simplification, one can write $M(x)$ as follows
\begin{align*}
M(x)&=\frac{N(x)}{x+1},
\end{align*}where we denote
\begin{align*}
N(x)&=x+1+(1+2x)^{\alpha}-(1-x)^{\frac{1-\alpha}{2}}(1+x)^{\frac{1+\alpha}{2}}-(1+2x)^{\frac{1+\alpha}{2}}-(1-\alpha)^{2}x^{2}(1+x)^{-1}.
\end{align*}
Differentiating $N(x)$ consecutively for three times with respect to $x$, we get
\begin{align*}
N'(x)=&1+2\alpha(1+2x)^{\alpha-1}+\frac{1-\alpha}{2}(1-x)^{\frac{-1-\alpha}{2}}(1+x)^{\frac{1+\alpha}{2}}-\frac{1+\alpha}{2}(1-x)^{\frac{1-\alpha}{2}}(1+x)^{\frac{\alpha-1}{2}}\\
&-(1+\alpha)(1+2x)^{\frac{\alpha-1}{2}}+(1-\alpha)^{2}x^{2}(1+x)^{-2}-(1-\alpha)^{2}2x(1+x)^{-1},\\
N''(x)=&\frac{1-\alpha^{2}}{4}(1-x)^{\frac{-3-\alpha}{2}}(1+x)^{\frac{1+\alpha}{2}}+\frac{1-\alpha^{2}}{2}(1-x)^{\frac{-1-\alpha}{2}}(1+x)^{\frac{\alpha-1}{2}}+
4x(1-\alpha)^{2}(x+1)^{-2}\\
&+\frac{1-\alpha^{2}}{4}(1-x)^{\frac{1-\alpha}{2}}(1+x)^{\frac{\alpha-3}{2}}+(1-\alpha^{2})(1+2x)^{\frac{\alpha-3}{2}}-4\alpha(1-\alpha)(1+2x)^{\alpha-2}\\
&-2(\alpha-1)^{2}(x+1)^{-1}-2(1-\alpha)^{2}x^{2}(1+x)^{-3}.
\end{align*}
Further, we have
\begin{align*}
N'''(x)=&\frac{1-\alpha^{2}}{4}\frac{3+\alpha}{2}(1-x)^{\frac{-5-\alpha}{2}}(1+x)^{\frac{1+\alpha}{2}}-\frac{1-\alpha^{2}}{4}
\frac{3-\alpha}{2}(1-x)^{\frac{1-\alpha}{2}}(1+x)^{\frac{\alpha-5}{2}}\\
&+\frac{3(1-\alpha^{2})}{4}\frac{1+\alpha}{2}(1-x)^{\frac{-3-\alpha}{2}}(1+x)^{\frac{\alpha-1}{2}}-\frac{3(1-\alpha^{2})}{4}
\frac{1-\alpha}{2}(1-x)^{\frac{-1-\alpha}{2}}(1+x)^{\frac{\alpha-3}{2}}\\
&+6(1-\alpha)^{2}(1+x)^{-2}-12x(1-\alpha)^{2}(1+x)^{-3}+8\alpha(1-\alpha)(2-\alpha)(1+2x)^{\alpha-3}\\
&+6x^{2}(1-\alpha)^{2}(1+x)^{-4}-(3-\alpha)(1-\alpha^{2})(1+2x)^{\frac{\alpha-5}{2}}.
\end{align*}
Simplifying each terms, one gets
\begin{align*}
N'''(x)&=(1-\alpha^{2})(1-x)^{\frac{-\alpha-5}{2}}(1+x)^{\frac{\alpha-5}{2}}(\alpha+3x)+6(1-\alpha)^{2}(1+x)^{-4}\\
&+8\alpha(1-\alpha)(2-\alpha)(1+2x)^{\alpha-3}-(3-\alpha)(1-\alpha^{2})(1+2x)^{\frac{\alpha-5}{2}}.
\end{align*}
Using Lemma \ref{17/50}, we conclude that $N'''(x)>0$ for $x\in[0, \frac{1}{2}]$ and $\alpha\in[\frac{17}{50}, 1)$. This proves that $N''(x)$ is increasing on $[0, \frac{1}{2}]$. Again since $N''(0)=N'(0)=N(0)=0$, so we have $N(x)>0$ on $x\in(0, \frac{1}{2}]$, and consequently we have $M(x)>0$ for $x\in(0, \frac{1}{2}]$. Hence we prove the lemma.
\end{proof}

\begin{proof}
\emph{\textbf{of Theorem \ref{COP-n}:}}\\
The LHS of inequality (\ref{COP-ninq}) is immediately followed from inequality (\ref{GIMHI}) by choosing $\lambda_{n}=\frac{n}{S_{n}^{\alpha}}$ and $\mu_{n}=S_{n}^{\frac{1-\alpha}{2}}$, where $S_{n}=\frac{n(n+1)}{2}$. The RHS of inequality (\ref{COP-ninq}) is a consequence of Lemma \ref{W-grater}. This completes the proof.
\end{proof}
\subsubsection{For increasing sequence $\{n^3\}$:}
Let us now choose $q_n=n^3$ for $n\in \mathbb{N}$. Then the Copson inequality (\ref{alphaCOPI}) reduces to the following inequality.
\begin{align}{\label{COP-n3inq}}
&\displaystyle\sum_{n=1}^{\infty}\frac{\hat{S}_{n}^{\alpha}|A_n-A_{n-1}|^{2}}{n^{3}}\geq
\frac{(\alpha-1)^{2}}{4}\displaystyle\sum_{n=1}^{\infty}\frac{n^{3}}{\hat{S}_{n}^{2-\alpha}}|A_{n}|^{2},
\end{align}where the associated constant term is best possible. The following theorem shows that the inequality (\ref{COP-n3inq}) admits an improvement.
\begin{Theorem}{\label{COP-n3}}
Let $A_{n}\in C_{c}(\mathbb{N}_0)$ be such that $A_{0}=0$ and $\alpha\in[0,\frac{1}{2}]$. Then we have
\begin{align}{\label{COP-n3inqimp}}
&\displaystyle\sum_{n=1}^{\infty}\frac{\hat{S}_{n}^{\alpha}|A_n-A_{n-1}|^{2}}{n^{3}}\geq
\displaystyle\sum_{n=1}^{\infty}\hat{w}_{n}(n^3, S)|A_{n}|^{2}>\frac{(\alpha-1)^{2}}{4}\displaystyle\sum_{n=1}^{\infty}\frac{n^{3}}{\hat{S}_{n}^{2-\alpha}}|A_{n}|^{2}
\end{align}
where the improved weight sequence $\hat{w}_{n}(n^3, S)$ is given by
\begin{align*}
\hat{w}_{n}(n^3, S)& =\frac{\hat{S}_{n}^{\alpha}}{n^{3}}\Big[1+\big(\frac{n}{n+1}\big)^{3}\Big(\frac{n+2}{n}\Big)^{2\alpha}-
\Big(\frac{n-1}{n+1}\Big)^{1-\alpha}-\big(\frac{n}{n+1}\big)^{3}\Big(\frac{n+2}{n}\Big)^{1+\alpha}\Big].
\end{align*}
\end{Theorem}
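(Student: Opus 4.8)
The plan is to follow the template of Theorem~\ref{COP-n}. The left inequality in (\ref{COP-n3inqimp}) is immediate from the extended improved Hardy inequality (\ref{GIMHI}): taking $\lambda_n = n^3/\hat{S}_n^{\alpha}$ and $\mu_n = \hat{S}_n^{(1-\alpha)/2}$, where $\hat{S}_n = \sum_{k=1}^{n}k^3 = \big(\tfrac{n(n+1)}{2}\big)^2$, a direct substitution shows that $w_n(\lambda,\mu)$ coincides with the weight $\hat{w}_n(n^3,S)$ in the statement. So everything reduces to the right inequality, that is, to the pointwise estimate
$$
\hat{w}_n(n^3,S) > \frac{(\alpha-1)^2}{4}\,\frac{n^3}{\hat{S}_n^{2-\alpha}}, \qquad n\in\mathbb{N},\ \alpha\in[0,\tfrac12].
$$
Writing $\hat{w}_n(n^3,S)=\frac{\hat{S}_n^{\alpha}}{n^3}\,T_n$ with $T_n$ the bracketed factor in the statement, and using $\hat{S}_n=\tfrac{n^2(n+1)^2}{4}$, this is equivalent to $T_n > \frac{4(\alpha-1)^2 n^2}{(n+1)^4}$ for every $n$.

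The case $n=1$ I would treat separately: it amounts to the scalar inequality $1+\tfrac18\big(3^{2\alpha}-3^{1+\alpha}\big) > \tfrac14(\alpha-1)^2$ on $[0,\tfrac12]$, which I would settle by showing that the difference of the two sides is monotone on $[0,\tfrac12]$ and positive at $\alpha=0$, exactly as $T(\alpha)$ was handled inside Lemma~\ref{W-grater}. For $n\geq 2$ I would pass to the continuous variable $x=\tfrac1n\in(0,\tfrac12]$, under which $\tfrac{n}{n+1}=\tfrac1{1+x}$, $\tfrac{n+2}{n}=1+2x$, $\tfrac{n-1}{n+1}=\tfrac{1-x}{1+x}$, and $\tfrac{n^2}{(n+1)^4}=\tfrac{x^2}{(1+x)^4}$, and set
$$
M(x)=1+\frac{(1+2x)^{2\alpha}}{(1+x)^3}-\Big(\frac{1-x}{1+x}\Big)^{1-\alpha}-\frac{(1+2x)^{1+\alpha}}{(1+x)^3}-\frac{4(\alpha-1)^2 x^2}{(1+x)^4}.
$$
Multiplying $M$ by $(1+x)^4$ clears all denominators and gives a smooth function $N(x)$ on $[0,\tfrac12]$. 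The Copson constant $\tfrac{(\alpha-1)^2}{4}$ is tuned precisely so that the Taylor coefficients of $N$ at $x=0$ vanish up to order $x^2$, i.e.\ $N(0)=N'(0)=N''(0)=0$; it therefore suffices to prove $N'''(x)>0$ on $(0,\tfrac12]$, since that forces $N''$, $N'$, $N$, and hence $M$, to be strictly positive on $(0,\tfrac12]$, which is exactly the required bound on $\hat{w}_n(n^3,S)$.

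The crux, and the step I expect to be the main obstacle, is the verification $N'''(x)>0$ for $x\in(0,\tfrac12]$ and $\alpha\in[0,\tfrac12]$. Differentiating three times, $N'''(x)$ is a finite combination of power functions of the form $(1-x)^{a}(1+x)^{b}$ and $(1+2x)^{c}$ with $\alpha$-dependent exponents; after grouping, most contributions are plainly nonnegative on $[0,\tfrac12]$ when $\alpha\le\tfrac12$, but one or two terms carry the opposite sign and must be dominated. I would isolate these into an auxiliary estimate of the same flavour as Lemma~\ref{17/50}, an inequality of the type $c_1(\alpha)(1+2x)^{\alpha-3}>c_2(\alpha)(1+2x)^{(\alpha-5)/2}$ (or a close variant) for all $x\in[0,\tfrac12]$, which after taking logarithms and differentiating in $x$ collapses to a one-variable inequality in $\alpha$; the fact that this scalar inequality holds on $[0,\tfrac12]$ but fails beyond $\alpha=\tfrac12$ is what pins the upper endpoint of the theorem. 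Should $N'''(x)>0$ not hold outright on part of the range, the fallback is to differentiate once more and track the sign of $N^{(4)}$ on subintervals, in the spirit of the $J_2(\alpha)$ analysis in Lemma~\ref{Positive}. Once the pointwise inequality $\hat{w}_n(n^3,S)>\frac{(\alpha-1)^2}{4}\frac{n^3}{\hat{S}_n^{2-\alpha}}$ is established for every $n$, multiplying by $|A_n|^2$ and summing yields the right inequality in (\ref{COP-n3inqimp}), completing the proof.
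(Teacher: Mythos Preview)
Your skeleton matches the paper's proof exactly: the left inequality via (\ref{GIMHI}) with $\lambda_n=n^3/\hat S_n^{\alpha}$, $\mu_n=\hat S_n^{(1-\alpha)/2}$; reduction of the right inequality to $V(n)>\frac{4(\alpha-1)^2 n^2}{(n+1)^4}$; the $n=1$ case by monotonicity of $T(\alpha)$ with $T(0)>0$; and for $n\ge2$ the substitution $x=1/n$, the clearing of denominators $U(x)=(1+x)^4 M(x)$, the vanishing $U(0)=U'(0)=U''(0)=0$, and the decisive claim $U'''(x)>0$ on $(0,\tfrac12]$.

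Where you and the paper part ways is in how $U'''(x)>0$ is actually established. You anticipate an auxiliary comparison of the shape $c_1(\alpha)(1+2x)^{\alpha-3}>c_2(\alpha)(1+2x)^{(\alpha-5)/2}$ modelled on Lemma~\ref{17/50}. That template does not transfer cleanly here: because $\hat S_n$ is a square, the exponents in $U'''$ involve $2\alpha$ rather than $\alpha$, and the competing terms are not a single pair of $(1+2x)$-powers but a mix of $(1+2x)^{2\alpha-2}$, $(1+2x)^{2\alpha-3}$, $(1+2x)^{\alpha-1}$, $(1+2x)^{\alpha-2}$ together with several $(1-x)^a(1+x)^b$ cross terms. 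The paper instead writes $U'''=R_1+R_2+R_3+R_4$ and shows each piece is nonnegative on $[0,\tfrac12]$ for $\alpha\in(0,\tfrac12]$ by elementary factoring; the only delicate piece is $R_4$, which is handled by splitting the $\alpha$-range into two overlapping subintervals (roughly $\alpha\le\frac{9-\sqrt{73}}{2}$ and $\alpha>\frac{13-\sqrt{145}}{6}$) with a different crude lower bound on each. The endpoint $\alpha=0$ is treated separately by the explicit identity $U(x)=2x^4+6x^3$. So your outline is sound, but the concrete mechanism you propose for the crux would need to be replaced by this four-term decomposition (or something equivalent) rather than a single Lemma~\ref{17/50}-type inequality.
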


\begin{proof}
We can easily establish the LHS of inequality (\ref{COP-n3inqimp}) from inequality (\ref{GIMHI}) by choosing $\lambda_{n}=\frac{n^3}{\hat{S}_{n}^{\alpha}}$, and  $\mu_{n}=\hat{S}_{n}^{\frac{1-\alpha}{2}}$, where $\hat{S}_{n}=\big(\frac{n(n+1)}{2}\big)^{2}$. The RHS of inequality (\ref{COP-n3inqimp}) is a consequence of Lemma \ref{W-gater for n^3}, which is proved below. This completes proof of the theorem.
\end{proof}
\begin{Lemma}{\label{W-gater for n^3}}
If $\alpha\in[0,\frac{1}{2}]$ then for each $n\in\mathbb{N}$, we have
\begin{align*}
&\hat{w}_{n}(n^3, S)>\frac{(\alpha-1)^{2}}{4}\frac{n^{3}}{\hat{S}_{n}^{2-\alpha}}.
\end{align*}
\end{Lemma}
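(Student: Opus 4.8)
Since $\hat w_n(n^3,S)=\frac{\hat S_n^{\alpha}}{n^{3}}\hat T_n$ with
\[
\hat T_n := 1+\Big(\frac{n}{n+1}\Big)^{3}\Big(\frac{n+2}{n}\Big)^{2\alpha}-\Big(\frac{n-1}{n+1}\Big)^{1-\alpha}-\Big(\frac{n}{n+1}\Big)^{3}\Big(\frac{n+2}{n}\Big)^{1+\alpha},
\]
and $(n^{3}/\hat S_n)^{2}=16n^{2}/(n+1)^{4}$, the claimed bound $\hat w_n(n^3,S)>\frac{(\alpha-1)^{2}}{4}\frac{n^{3}}{\hat S_n^{2-\alpha}}$ is equivalent to $\hat T_n>\frac{4(\alpha-1)^{2}n^{2}}{(n+1)^{4}}$. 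For $n=1$ this reads $8+3^{\alpha}(3^{\alpha}-3)>2(\alpha-1)^{2}$, which is immediate on $[0,\tfrac12]$ because $3^{\alpha}(3^{\alpha}-3)\ge-\tfrac94$ there while $2(\alpha-1)^{2}\le2$ (a monotonicity-in-$\alpha$ argument works equally well).

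For $n\ge2$ the plan is to set $x=\tfrac1n\in(0,\tfrac12]$, so that $\frac{n}{n+1}=\frac1{1+x}$, $\frac{n+2}{n}=1+2x$, $\frac{n-1}{n+1}=\frac{1-x}{1+x}$ and $\frac{n^{2}}{(n+1)^{4}}=\frac{x^{2}}{(1+x)^{4}}$. Multiplying the desired inequality by $(1+x)^{4}>0$ turns it into $N(x)>0$ on $(0,\tfrac12]$, where
\[
N(x)=(1+x)^{4}+(1+x)(1+2x)^{2\alpha}-(1-x)^{1-\alpha}(1+x)^{3+\alpha}-(1+x)(1+2x)^{1+\alpha}-4(\alpha-1)^{2}x^{2}.
\]
A Taylor expansion about $x=0$ shows that the constant, linear and quadratic coefficients of $N$ all cancel, i.e. $N(0)=N'(0)=N''(0)=0$. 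Hence it suffices to prove $N'''(x)>0$ on $[0,\tfrac12]$ for $\alpha\in[0,\tfrac12]$; integrating this up three times from $0$ gives $N''>0$, then $N'>0$, then $N>0$ on $(0,\tfrac12]$, and reading off $x=\tfrac1n$ yields the lemma for all $n\ge2$.

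The heart of the matter is $N'''(x)>0$, and this is the step I expect to require the most care. After differentiating three times, $(1+x)^{4}$ contributes the positive block $24(1+x)$, the term $-4(\alpha-1)^{2}x^{2}$ disappears, and the remaining three pieces each expand into a short sum of terms of the form $c(\alpha)(1-x)^{p}(1+x)^{q}$ or $c(\alpha)(1+x)(1+2x)^{r}$. Grouping them, $N'''(x)$ becomes $24(1+x)$ plus: two ``small'' negative terms, $24\alpha(2\alpha-1)(1+2x)^{2\alpha-2}$ and $-12\alpha(1+\alpha)(1+2x)^{\alpha-1}$, which on $[0,\tfrac12]\times[0,\tfrac12]$ satisfy the crude bounds $\ge-3$ and $\ge-9$ (using $(1+2x)^{2\alpha-2},(1+2x)^{\alpha-1}\le1$ together with $\max_{[0,1/2]}24\alpha(1-2\alpha)=3$, $\max_{[0,1/2]}12\alpha(1+\alpha)=9$); several manifestly nonnegative terms; and the third derivative of $-(1-x)^{1-\alpha}(1+x)^{3+\alpha}$, which I would write as three nonnegative terms plus the single negative term $-(3+\alpha)(2+\alpha)(1+\alpha)(1-x)^{1-\alpha}(1+x)^{\alpha}$. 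The decisive point is that this last term is absorbed by part of the nonnegative term $3(1-\alpha)(3+\alpha)(2+\alpha)(1-x)^{-\alpha}(1+x)^{1+\alpha}$: their difference factors as $(3+\alpha)(2+\alpha)(1-x)^{-\alpha}(1+x)^{\alpha}\big[3(1-\alpha)(1+x)-(1+\alpha)(1-x)\big]$, and $3(1-\alpha)(1+x)-(1+\alpha)(1-x)=2\big((1-2\alpha)+x(2-\alpha)\big)\ge0$ precisely when $\alpha\le\tfrac12$. Thus the entire contribution of $-(1-x)^{1-\alpha}(1+x)^{3+\alpha}$ to $N'''$ is nonnegative, and $N'''(x)\ge24(1+x)-3-9\ge12>0$.

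So I expect the only genuinely delicate part to be the bookkeeping in computing and regrouping $N'''$; no auxiliary inequality of the logarithmic type used in the $q_n=n$ case (Lemma \ref{17/50}) seems to be needed here, and the hypothesis $\alpha\le\tfrac12$ enters transparently — it is exactly the condition that keeps $3(1-\alpha)(1+x)-(1+\alpha)(1-x)$ nonnegative on $[0,\tfrac12]$. Should the crude $-3$ and $-9$ estimates leave too little room after all, I would retain one of the discarded nonnegative terms (for instance $8\alpha(1+\alpha)(1-\alpha)(1+x)(1+2x)^{\alpha-2}$, which combines with $-12\alpha(1+\alpha)(1+2x)^{\alpha-1}$ into one term of smaller modulus) and sharpen the bound accordingly.
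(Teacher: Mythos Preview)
Your argument is correct and follows the same overall scheme as the paper: rewrite the claim as $N(x)>0$ for $x\in(0,\tfrac12]$ with the identical function $N$ (the paper calls it $U$), verify $N(0)=N'(0)=N''(0)=0$, and reduce everything to $N'''(x)>0$. Your $n=1$ step is a harmless variant of the paper's derivative argument.

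The genuine difference is in how $N'''(x)>0$ is established. The paper decomposes $N'''$ into four blocks $R_1,R_2,R_3,R_4$ and shows each is nonnegative; the awkward block is $R_4$, for which the authors split $(0,\tfrac12]$ into two overlapping $\alpha$-ranges $(0,\tfrac{9-\sqrt{73}}{2}]$ and $(\tfrac{13-\sqrt{145}}{6},\tfrac12]$ and argue separately. You avoid this entirely: after the same key factorisation showing that the third derivative of $-(1-x)^{1-\alpha}(1+x)^{3+\alpha}$ is nonnegative (exactly the paper's $R_2$ computation, where $\alpha\le\tfrac12$ enters), you simply bound the only two remaining negative pieces by $-3$ and $-9$ uniformly on $[0,\tfrac12]\times[0,\tfrac12]$ and use $24(1+x)\ge24$ to get $N'''\ge12$. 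This is shorter and more transparent; the paper's finer regrouping buys nothing extra here, since only positivity of $N'''$ is needed.
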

\begin{proof}
Note that one can write $\hat{w}_{n}(n^3, S)=\frac{\hat{S}_{n}^{\alpha}}{n^3}V(n)$, where
\begin{align*}
V(n)&=1+\frac{1}{(1+\frac{1}{n})^{3}}\Big(1+\frac{2}{n}\Big)^{\alpha}-\Big(\frac{1-\frac{1}{n}}{1+\frac{1}{n}}\Big)^{1-\alpha}-
\frac{1}{(1+\frac{1}{n})^{3}}\Big(1+\frac{2}{n}\Big)^{1+\alpha}.
\end{align*}To establish the desired inequality it is enough to prove that for each $n\in\mathbb{N}$
\begin{align}{\label{v(n)geq}}
&V(n)>\frac{(\alpha-1)^{2}}{4}\frac{n^{6}}{\hat{S}_{n}^{2}}=\frac{4(\alpha-1)^{2}n^{2}}{(n+1)^{4}}~~~~~\mbox{holds}.
\end{align}
In case of $n=1$, we denote the difference in inequality (\ref{v(n)geq}) by $T(\alpha)$, where
\begin{align}
T(\alpha)&=1+\frac{3^{2\alpha}-3^{\alpha+1}}{8}-\frac{(\alpha-1)^{2}}{4}.
\end{align}
Now differentiation of $T(\alpha)$ gives
\begin{align*}
T'(\alpha)&=\frac{1-\alpha}{2}+\frac{(3^{2\alpha}2-3^{\alpha+1})\log3}{8},\\
&\geq\frac{1-\alpha}{2}-\frac{3^{\alpha}}{8}\log3~~\Big[\mbox{since},~~3^{2\alpha}\geq3^{\alpha}\Big]\\
&\geq\Big(\frac{1}{4}-\frac{\sqrt3}{8}\log3\Big)>0.
\end{align*}
 That is we see  that $T'(\alpha)>0$ for any $\alpha\in[0, \frac{1}{2}]$. Again we have $T(0)>0$, therefore $T(\alpha)>0$ for any $\alpha\in[0,\frac{1}{2}]$. This proves the inequality (\ref{v(n)geq}).\\
Let us now choose $n\geq2$ for $n\in\mathbb{N}$. We define a real-valued function $V(x)$ on $[0, 1/2]$ as below
\begin{align*}
V(x)&=1+\frac{1}{(1+x)^{3}}\Big(1+2x\Big)^{2\alpha}-\Big(\frac{1-x}{1+x}\Big)^{1-\alpha}-\frac{1}{(1+x)^{3}}\Big(1+2x\Big)^{1+\alpha}-\frac{4(\alpha-1)^{2}x^{2}}{(x+1)^{4}}\\
&=\frac{U(x)}{(x+1)^{4}}, ~~~\mbox{where}
\end{align*}
$U(x)=(x+1)^{4}+(1+x)(1+2x)^{2\alpha}-(1-x)^{1-\alpha}(1+x)^{3+\alpha}-(x+1)(1+2x)^{1+\alpha}-4(1-\alpha)^{2}x^{2}$. In case of $\alpha=0$, we have $U(x)=(x+1)^{4}+(1+x)-(1-x)(1+x)^{3}-(x+1)(1+2x)-4x^{2}=2x^{4}+6x^{3}>0$. Suppose now that $\alpha\in(0, \frac{1}{2}]$. Then repeated differentiation of $U(x)$ gives
\begin{align*}
&U'(x)=4(x+1)^{3}+4\alpha(x+1)(1+2x)^{2\alpha-1}+(1+2x)^{2\alpha}+(1-\alpha)(1+x)^{3+\alpha}(1-x)^{-\alpha}\\
&-(3+\alpha)(1+x)^{2+\alpha}(1-x)^{1-\alpha}-(1+2x)^{1+\alpha}-2(x+1)(1+\alpha)(1+2x)^{\alpha}-8x(\alpha-1)^{2},
\end{align*}
\begin{align*}
&U''(x)=12(x+1)^{2}+\alpha(1-\alpha)(1+x)^{\alpha+3}(1-x)^{-\alpha-1}+2(1-\alpha)(\alpha+3)(x+1)^{\alpha+2}(1-x)^{-\alpha}\\
&+8\alpha(2\alpha-1)(x+1)(2x+1)^{2\alpha-2}+8\alpha(1+2x)^{2\alpha-1}-(\alpha+3)(\alpha+2)(x+1)^{\alpha+1}(1-x)^{1-\alpha}\\
&-4\alpha(\alpha+1)(x+1)(2x+1)^{\alpha-1}-4(\alpha+1)(2x+1)^{\alpha}-8(\alpha-1)^{2},
\end{align*}
and $U'''(x)=R_{1}+R_{2}+R_{3}+R_{4}$, where
\begin{align*}
R_{1}&=24(x+1)-12\alpha(\alpha+1)(1+2x)^{\alpha-1},\\
R_{2}&=3(1-\alpha)(\alpha+2)(\alpha+3)(1+x)^{\alpha+1}(1-x)^{-\alpha}-(\alpha+1)(\alpha+2)(\alpha+3)(1+x)^{\alpha}(1-x)^{1-\alpha},\\
R_{3}&=32\alpha(1-2\alpha)(1-\alpha)(1+x)(1+2x)^{2\alpha-3}-8\alpha(1-2\alpha)(1+2x)^{2\alpha-2}, ~~~\mbox{and}\\
R_{4}&=\alpha(1-\alpha^{2})(1-x)^{-\alpha-2}(1+x)^{\alpha+3}+3\alpha(1-\alpha)(\alpha+3)(1+x)^{2+\alpha}(1-x)^{-1-\alpha}\\
&+8\alpha(1-\alpha^{2})(1+x)(2x+1)^{\alpha-2}-16\alpha(1-2\alpha)(1+2x)^{2\alpha-2}.
\end{align*}
One can immediately sees that $R_{1}>0$ for any $\alpha\in (0, \frac{1}{2}]$. Note that $R_2$ can be simplified as below
\begin{align*}
R_{2}=&(\alpha+1)(\alpha+2)(\alpha+3)(1+x)^{\alpha}(1-x)^{1-\alpha}\Big(\frac{3(1-\alpha)}{1+\alpha}\frac{(1+x)}{(1-x)}-1\Big).
\end{align*}
Since $\frac{3(1-\alpha)}{1+\alpha}\geq1$ for $\alpha\in (0, \frac{1}{2}]$ so we have $R_{2}\geq0$ for any $\alpha\in (0, \frac{1}{2}]$. Again for $\alpha\in (0, \frac{1}{2}]$, we have $R_{3}=8\alpha(1-2\alpha)(1+2x)^{2\alpha-3}\Big((3-4\alpha)+2x(1-2\alpha)\Big)\geq 0$. Since $4(2x+1)^{\alpha-2}>1$, $(1-x)^{-\alpha-2}(1+x)^{\alpha+3}\geq1$ and $(1+x)^{2+\alpha}(1-x)^{-1-\alpha}\geq1$ in $x\in[0, \frac{1}{2}]$, so we have
\begin{align*}
R_{4}&>\frac{1}{4}\Big(4\alpha(1-\alpha^{2})+12\alpha(1-\alpha)(\alpha+3)+8\alpha(1-\alpha^{2})-64\alpha(1-2\alpha)\Big)=\frac{S(\alpha)}{4},
\end{align*}where we denote $S(\alpha)$ by the following function
\begin{align*}
 S(\alpha)&=-24\alpha^{3}+104\alpha^{2}-16\alpha=24\alpha\Big(\alpha-\frac{13-\sqrt{145}}{6}\Big)\Big(\frac{13+\sqrt{145}}{6}-\alpha\Big).
\end{align*}
Clearly $S(\alpha)>0$ for any $\alpha\in(\frac{13-\sqrt145}{6}, \frac{1}{2}]$. Hence $R_{4}>0$ for any $\alpha\in(\frac{13-\sqrt145}{6},\frac{1}{2}]$. Again we note that $R_{4}$ is dominated by the function $R(x)$, where
\begin{align*}
R(x)&=\alpha(1-\alpha^{2})+3\alpha(1-\alpha)(\alpha+3)+8\alpha(1-\alpha^{2})(2x+1)^{\alpha-2}-16\alpha(1-2\alpha)(1+2x)^{2\alpha-2}.
\end{align*} Differentiating $R(x)$, we get
\begin{align*}
R'(x)&=64\alpha(1-\alpha)(1-2\alpha)(1+2x)^{2\alpha-3}-16\alpha(1-\alpha^{2})(2-\alpha)(2x+1)^{\alpha-3}\\
&=16\alpha(1-\alpha^{2})(2-\alpha)(2x+1)^{\alpha-3}\Big(\frac{4(1-2\alpha)}{(1+\alpha)(2-\alpha)}(1+2x)^{\alpha}-1\Big).
\end{align*} Since $4(1-2\alpha)-(1+\alpha)(2-\alpha)=\alpha^{2}-9\alpha+2=\Big(\alpha-\frac{(9-\sqrt{73})}{2}\Big)\Big(\alpha-\frac{(9+\sqrt{73})}{2}\Big)$ so one gets $\frac{4(1-2\alpha)}{(1+\alpha)(2-\alpha)}\geq1$ for any $\alpha\in(0, \frac{(9-\sqrt{73})}{2}]$, and hence $R'(x)\geq0$ for $x\in[0, \frac{1}{2}]$ and $\alpha\in(0, \frac{(9-\sqrt{73})}{2}]$. As $R(0)>0$ so $R(x)\geq0$ for $x\in[0, \frac{1}{2}]$ and $\alpha\in(0, \frac{(9-\sqrt{73})}{2}]$. Hence $R_{4}\geq 0$ for any $\alpha\in(0, \frac{(9-\sqrt{73})}{2}]$. Finally we get $R_{4}\geq 0$ for any $\alpha\in(0, \frac{1}{2}]$ and $x\in[0, \frac{1}{2}]$. This proves that $U'''(x)>0$ $\forall$ $\alpha\in(0,\frac{1}{2}]$.
Therefore $U''(x)$ is increasing on $(0, \frac{1}{2}]$. Also since $U''(0)=U'(0)=U(0)=0$, therefore we get $U(x)>0$ in $\alpha\in(0,\frac{1}{2}]$, and consequently we have $V(x)>0$ for $x\in(0, \frac{1}{2}]$ and $\alpha\in(0,\frac{1}{2}]$. Therefore $V(x)>0$ for $x\in(0, \frac{1}{2}]$ and $\alpha\in[0, \frac{1}{2}]$. This proves the Lemma.
\end{proof}


\begin{thebibliography}{23}


\bibitem{ECN}
         {\sc E. T. Copson},
         {\it Note on series of positive terms},
         {J. London Math. Soc.}, {\bf s1-2(1)}, (1927), 9-12.

\bibitem{ECN2}
         {\sc E. T. Copson},
         {\it Note on series of positive terms},
         {J. London Math. Soc.}, {\bf s1-3(1)}, (1928), 49-51.

\bibitem{DASMAN}
         {\sc B. Das and A. Manna},
         {\it On the improvements of Hardy and Copson inequalities},
         {Revista de la Real Academia de Ciencias Exactas, F\'{i}sicas y Naturales. Serie A. Matem\'{a}ticas}, {\bf 117(2)}, (2023), 18 pages.

\bibitem{DASMAN2}
         {\sc B. Das and A. Manna},
         {\it On improvements of the Hardy, Copson and Rellich inequalities},
         {https://arxiv.org/abs/2309.04923}, {\bf}, (2023), 23 pages.

%

%

\bibitem{SGA}
         {\sc S. Gupta},
         {\it Discrete weighted Hardy inequality in 1-D},
         {J. Math. Anal. Appl}, {\bf 514(2)}, (2022), 126345.

%

\bibitem{GHYLITTLE}
         {\sc G. H. Hardy, J. E. Littlewood, and G. P\'{o}lya},
         {\it Inequalities},
         {2nd Edition, Cambridge University Press}, 1967.

\bibitem{GHYNOTE251}
         {\sc G. H. Hardy},
         {\it Notes on some points in the integral calculus, LX, An inequality between integrals},
         {Messenger of Math.}, {\bf54}, (1925), 150-156.

\bibitem{GHYNOTE252}
         {\sc G. H. Hardy},
         {\it Notes on a theorem of Hilbert concerning series of positive terms},
         {Proc. London Math. Soc.}, {\bf23}, (1925), 45-46.

\bibitem{GHYLITTELE}
         {\sc G. H. Hardy and J. E. Littlewood},
         {\it Elementary theorems concerning power series with positive coefficients and moment constants of positive functions},
         {Journal f\"{u}r Mathematik}, {\bf1927(157)}, (1927), 141-158.

\bibitem{HUANG21}
         {\sc Y. C. Huang},
         {\it A first order proof of the improved discrete Hardy inequality},
         {Arch. Math. (Basel)}, {\bf117 (6)}, (2021), 671-674.

\bibitem{HUANGYE}
         {\sc X. Huang and D. Ye},
         {\it One-dimensional sharp discrete Hardy-Rellich inequalities},
         {J. Lond. Math. Soc.}, {\bf109}(1), 2024, e12838.

\bibitem{MKR}
         {\sc M. Keller, Y. Pinchover and F. Pogorzelski},
         {\it An improved discrete Hardy inequality},
         {Amer. Math. Monthly}, {\bf125}(4), (2018), 347-350.
%
\bibitem{MKRGRAPH}
         {\sc M. Keller, Y. Pinchover and F. Pogorzelski},
         {\it Optimal Hardy inequalities for Schrodinger operators on graphs},
         {Comm. Math. Phys.}, {\bf358}, (2018), 767-790.

\bibitem{MKRRELGRAPH}
         {\sc M. Keller, Y. Pinchover and F. Pogorzelski},
         {\it From Hardy to Rellich inequalities on graphs},
         {Proc. Lond. Math. Soc.}, {\bf122 (3)}, (2021), 458-477.

\bibitem{DKK}
         {\sc D. Krej\v{c}i\v{r}\'{i}k and F. \v{S}tampach},
         {\it A sharp form of the discrete Hardy inequality and the Keller-Pinchover-Pogorzelski inequality},
         {Amer. Math. Monthly}, {\bf122(3)} (2022), 281-283.

\bibitem{DKALFS}
         {\sc D. Krej\v{c}i\v{r}\'{i}k, A. Laptev and F. \v{S}tampach},
         {\it Spectral enclosure and stability for non-self-adjoint discrete Schr\"{o}dinger operator on the half-line},
         {Bull. London Math. Soc.}, {\bf 54(6)}, 2022, 2379-2403.


%

\end{thebibliography}
\end{document}